\def\text#1{{\rm #1}}
\def\calg#1{{\mathcal #1}}
\def\eig{eigenvalue}
\def\eigs{eigenvalues}
\def\oalp{\mathcal{O}(m^{-1})}
\def\Re{\mathbb R}
\def\hm{\overline{h}_m}
\def\Lamalp{\tilde\Lambda(m)}
\def\delt{\tilde{\delta}}
\def\deltt{\tilde{\delta}^{T}}
\def\oalp{\cO(m^{-1})}
\newcommand{\cT}{\mathcal{T}}
\newcommand{\cP}{\mathcal{P}}
\newcommand{\cO}{\mathcal{O}}
\newcommand{\cN}{\mathcal{N}}
\begin{document}

\title{Robust multigrid preconditioners for cell-centered
finite volume discretization of the high-contrast diffusion equation 
}

\titlerunning{Robust multigrid preconditioner for high-contrast diffusion equation}        

\author{Burak Aksoylu \and Zuhal Yeter}

\authorrunning{Burak Aksoylu \and Zuhal Yeter} 

\institute{B. Aksoylu (corresponding author) \and Z. Yeter \at
Department of Mathematics \&  \\
Center for Computation and Technology, \\
Louisiana State University\\
216 Johnston Hall, Baton Rouge LA, 70803 USA\\
\email{burak@cct.lsu.edu,~zyeter1@cct.lsu.edu}
}
\date{
{\sc Submitted: April 11, 2009}}

\maketitle

\pagestyle{headings}
\markboth
{B. Aksoylu, Z. Yeter}
{Robust cell-centered multigrid preconditioners for high-contrast 
diffusion equation}

\begin{abstract}

  We study a conservative 5-point cell-centered finite volume
  discretization of the high-contrast diffusion equation. We aim to
  construct preconditioners that are robust with respect to the
  magnitude of the coefficient contrast and the mesh size
  simultaneously.  For that, we prove and numerically demonstrate the
  robustness of the preconditioner proposed by Aksoylu et al.  (2008,
  Comput. Vis. Sci. 11, pp. 319--331) by extending the devised
  singular perturbation analysis from linear finite element
  discretization to the above discretization.  The singular
  perturbation analysis is more involved than that of finite element
  because all the subblocks in the discretization matrix depend on the
  diffusion coefficient. However, that dependence is eliminated
  asymptotically. This allows the same preconditioner to be utilized
  due to similar limiting behaviours of the submatrices; leading to a
  narrowing family of preconditioners that can be used for different
  discretizations---a desirable preconditioner design goal.  We
  compare our numerical results to standard cell-centered multigrid
  and observe that performance of our preconditioner is independent of
  the utilized prolongation operators and smoothers.

  As a side result, we also prove that the solution over the
  highly-diffusive island becomes constant asymptotically.
  Integration of this qualitative understanding of the underlying PDE
  to our preconditioner is the main reason behind its superior
  performance.  Diagonal scaling is probably the most basic
  preconditioner for high-contrast coefficients. Extending the matrix
  entry based spectral analysis introduced by Graham and Hagger, we
  rigorously show that the number of small eigenvalues of the
  diagonally scaled matrix depends on the number of isolated islands
  comprising the highly-diffusive region.

\keywords{
Diffusion equation \and 
high-contrast coefficients \and
interface problem \and 
discontinuous coefficients \and
cell-centered multigrid \and 
mass conservative \and 
finite volume \and 
cell-centered discretization \and
singular perturbation analysis \and 
diagonal scaling}
\subclass{65F10 \and 65N22 \and 65N55 \and 65F35 \and 15A12 \and 65N55}
\end{abstract}

\section{Introduction}

We advocate that qualitative understanding of the PDE operators and
their dependence on the coefficients is essential for designing
preconditioners. Since, the performance, hence the robustness, of a
preconditioner depends essentially on the degree to which the
preconditioner approximates the properties of the underlying
PDE. Therefore, designing preconditioners involves a process that
draws heavily upon effective utilization of tools from operator theory
as well as singular perturbation analysis (SPA).  In the operator
theory framework, Aksoylu and Beyer~\cite{AkBe2008,AkBe2009} have
studied the diffusion equation with rough coefficients. The roughness
of coefficients creates serious complications.  For instance, it was
shown in~\cite{AkBe2008} that the standard elliptic regularity in the
smooth coefficient case fails to hold. Moreover, the domain of the
diffusion operator heavily depends on the regularity of the
coefficients.

Roughness of PDE coefficients causes loss of robustness of
preconditioners.  We aim to establish robustness with respect to the
magnitude of the coefficient contrast and the mesh size
simultaneously.  In that regard, SPA provides valuable insight into
the qualitative nature of the underlying PDE. In the case of linear
finite element (FE), Aksoylu et al.~\cite{AGKS:2007} devised a SPA on the
matrix entries to study the robustness of the same preconditioner
$B_{AGKS}$ under consideration in this article.  SPA turned out to be
an effective tool in analyzing certain behaviors of the discretization
matrix $K$ such as the asymptotic rank, decoupling, low-rank
perturbations of the resulting submatrices. This information in turn is
exploited to accomplish dramatic computational savings. 
In~\cite{AGKS:2007}, we also provided a rigorous convergence analysis
of $B_{AGKS}$.  Hence, with the insights provided by SPA and the
operator theory, we are in control of the effectiveness and
computational feasibility at the same time.

The preconditioner $B_{AGKS}$ originates from the family of robust
preconditioners constructed by Aksoylu and Klie~\cite{AkKl:2007} for
the cell-centered finite volume (FV) discretization of the
high-contrast diffusion equation for porous media flow related
applications based on the two-point approximation scheme studied
in~\cite{aarnesGimseLie2007}.  However, for the original variants of
$B_{AGKS}$, robustness with respect to the contrast size was the main
design feature.  Hence, the emphasis in~\cite{AkKl:2007} was mostly on
deflation methods (that are used as stage-two preconditioners) based
on the Krylov subspace solvers. As in~\cite{AGKS:2007}, in this
article, we incorporate (cell-centered) multigrid preconditioners to
restore robustness with respect to the mesh size.  Hence, the main
purpose of this article is to extend the preconditioner $B_{AGKS}$ and
the related analysis to a 5-point conservative cell-centered FV
discretization.  From the flow application perspective, maintaining
the continuity of the flux across the control volume interfaces
ensures the highly popular and crucial discretization property of
\emph{local mass conservation}. Our interest in flow applications
is the main reason behind conducting research in the direction of
mass conservative discretizations.

We prove and numerically demonstrate that the very same preconditioner
$B_{AGKS}$ that is used for FE discretization can also be used with
minimal modification for FV discretizations.  This was possible by the
help of SPA because we have identified similar algebraic features of
the discretization matrices between linear FE and FV methods.  The
same preconditioner can be utilized due to similar limiting behaviours
of the submatrices.  This observation leads to a narrowing family of
preconditioners that can be used for different discretizations.
Therefore, we have accomplished to construct a preconditioner that is
compatible with and equally effective under different discretizations,
and this is a desirable feature in the design and construction of
preconditioners.  In addition, extension to FV discretization does not
spoil $B_{AGKS}$'s inherit algebraic nature.  The first algebraic
phase involves partitioning of the degrees of freedom (DOF) into a set
corresponding to a high-diffusivity and a low-diffusivity region. For
high enough contrast, we can still obtain the partitioning by
examining the diagonal entries of $K$.  This simple algebraic
examination rules out the standard multigrid requirement of the
coarsest mesh resolving the boundary of the island.

The diffusion equation with discontinuous coefficients is known as the
\emph{interface problem} in the computational fluid dynamics
community.  There has been intense research activity on the interface
problem. It is such a well-established problem that one can find it in
the text books dedicated to multigrid methods, for instance, by
Wesseling~\cite{Wesseling.P2001} and Trottenberg, Oosterlee, and
Sch\"uller \cite{Trottenberg.U;Oosterlee.C;Schuller.A2001}.  Mohr and
Wienands~\cite{mohrWienands2004} revisited the cell-centered multigrid
(CCMG) preconditioner and attributed the pioneering CCMG to
Wesseling~\cite{wesseling1988}; see~\cite{mohrWienands2004} and the
references therein for further review of CCMG.

For interface problems, there have been many attempts to construct problem
independent prolongation and restriction operators to accommodate the
the roughness of coefficients. Among the variational approaches,
Wesseling~\cite{wesseling1988} and Wesseling and
Khalil~\cite{wesselingKhalil1992} constructed such prolongations for
high-contrast coefficients, whereas Kwak \cite{kwak1999} studied
medium-contrasts.  Kwak and Lee~\cite{kwakLee2004} proposed problem
dependent prolongations for medium-contrasts.  Among the
non-variational approaches, Ewing and Shen~\cite{ewingShen1993}
examined medium-contrast with piecewise constant prolongation and
restriction operators. In our CCMG and $B_{AGKS}$ implementations, we
prefer problem independent prolongation operators utilizing Galerkin
variational approach based on Wesseling and
Khalil~\cite{wesselingKhalil1992} and bi-linear interpolation;
see~\cite[Figure 2]{mohrWienands2004}
or~\cite[p. 72]{Wesseling.P2001}.

Diagonal scaling is probably the most basic preconditioner for
discretizations with high-contrast coefficients. Although diagonal
scaling has no effect on the asymptotic behaviour of the condition
number, Graham and Hagger~\cite{GrHa:99} observed, in the case of FE,
that spectrum of the diagonally scaled matrix $A$ enjoys better
clustering than that of $K$.  The spectrum of $A$ is bounded from
above and below except a single eigenvalue in the case of a single
isolated highly-diffusive island. On the other hand, the spectrum of $K$
contains eigenvalues approaching infinity with cardinality depending
on the number of DOF contained within highly-diffusive island. For
faster convergence, the Krylov subspace solver favors the clustering
provided by diagonal scaling.  Based the matrix entry based
spectral analysis introduced by Graham and Hagger~\cite{GrHa:99} for linear FE,
we extend the spectral analysis to cell-centered FV discretization and
obtain a similar spectral result for $A$. Namely, the number of small
eigenvalues of $A$ depends on the number of isolated islands
comprising the highly-diffusive region. This rigorous analysis is
presented in Section~\ref{sec:diagScaling}.

We extend the devised SPA from FE to FV discretization in order
to explain the properties of the submatrices related to $K(m)$.
In particular, SPA of $K_{HH}(m)$, as diffusivity $m \to \infty$, has
important implications for the behaviour of the Schur complement $S(m)$
of $K_{HH}(m)$ in $K(m)$. Namely,
\begin{equation*}
S(m) = S^\infty + \mathcal{O}(m^{-1}) \ ,
\end{equation*}
where $S^\infty$ is a low rank perturbation of $K_{LL}^\infty$, i.e.,
the limiting $K_{LL}(m)$. The rank of the perturbation depends on the
number of disconnected components comprising the highly-diffusive
region.  This special limiting form of $S(m)$ allows us to build a
robust approximation of $S(m)^{-1}$ by merely using solvers for
$K_{LL}^\infty$ by the help of the Sherman-Morrison-Woodbury
formula. In Section~\ref{mainResultsSPA}, by using SPA, the asymptotic
behaviour of submatrices is provided and the final convergence proof
of $B_{AGKS}$ is based on that result.

The remainder of the article is structured as follows.  In Section
\ref{sec:qualitativeNature}, we reveal the qualitative nature of the
solution of the high-contrast diffusion equation and the resulting
decoupling in the solution. This valuable insight is provided by
SPA. In Section \ref{sec:subdomainDefl}, we highlight the
implementation aspects of $B_{AGKS}$. In addition, we present how
subdomain deflation gives rise to a dramatic computational saving due
to reduction to a block diagonal system.  Finally, in Section
\ref{sec:numericalExperiments}, we numerically demonstrate the
simultaneous robustness of $B_{AGKS}$ with respect to magnitude of the
coefficient contrast and the mesh size. We also provide comparisons to
CCMG preconditioner with different prolongation operators and
smoothers.

\section{The underlying PDE and the linear system}

\begin{figure}[htbp]
\centering{
\includegraphics[width=1.25in]{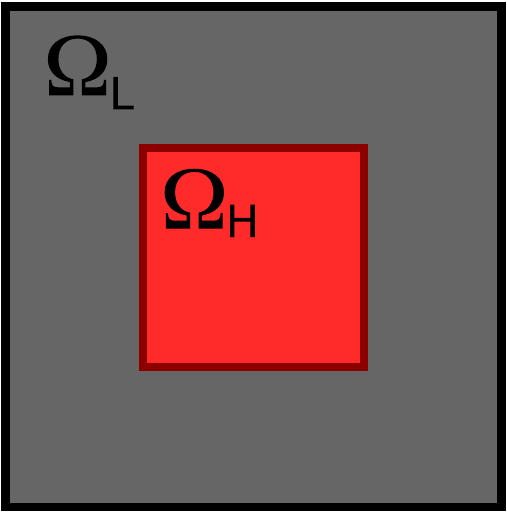}}
\caption{$\Omega = \overline{\Omega}_H \cup \Omega_L$ where $\Omega_H$
  and $\Omega_L$ are high and low diffusivity regions,
  respectively.\label{fig:domain1}}
\end{figure}

We study the following high-contrast diffusion equation.
\begin{equation} \label{mainProblem}
\left\{
\begin{array}{rcll}
 -\nabla\cdot(\alpha\nabla u) & = & f~~& \text{in}~ \Omega \\
u & = & 0~~ & \text{on}~ \partial \Omega  
\end{array}
\right.
\end{equation}
where $\Omega \subset \Re^d,~d=2,3$.  The coefficient $\alpha(x)$ may
vary over many orders of magnitude in an unstructured way on $\Omega$.
Problems with high-contrast coefficients are ubiquitous in porous
media flow applications. Many examples of this kind arise in
groundwater flow and oil reservoir simulation; see for example the
comprehensive overviews \cite{A:07,GD:05,MY:06,Noetinger05}.
Consequently, development of efficient solvers for high-contrast
heterogeneous media has been an active area of research, specifically
in the setting of multiscale solvers
~\cite{AarnesHou:02,GrHa:99,GrLeSc:2007,GrSc:2007,ScVa2007}.  In
addition, the fictitious domain method and composite materials are
also sources of rough coefficients; see the references
in~\cite{KnWi:2003}.  Important current applications deal with
composite materials whose components have nearly constant diffusivity,
but vary by several orders of magnitude.  In composite material
applications, it is quite common to idealize the diffusivity by a
piecewise constant function.  Likewise, we restrict the diffusion
process to a \emph{binary regime} (see Figure \ref{fig:domain1}) in
which the coefficient $\alpha$ is a piecewise constant function with
the following values:
\begin{equation*}
\alpha(x) =
\begin{cases}
m \gg 1, & x \in \Omega_H, \\
1, & x \in \Omega_L.
\end{cases}
\end{equation*}
Due to the atomistic structure of matter, the physical treatment of
diffusion involves regular ($C^\infty$-) diffusivity.  Aksoylu and
Beyer~\cite{AkBe2008} showed that the idealization of diffusivity by
piecewise constant coefficients is meaningful by showing a continuous
dependence of the solutions on the diffusivity.

Let us denote the linear system arising from the finite
volume discretization by:
\begin{equation} \label{mainLinearSys}
K(m)~x(m) = b.
\end{equation}
Let $\Omega$ be decomposed with respect to diffusivity value as
\begin{equation} \label{subregionDecomp}
\Omega = \overline{\Omega}_H \cup \Omega_L,
\end{equation}
where $\Omega_H$ and $\Omega_L$ denote the high and low diffusivity
regions, respectively.  When $m$-dependence is explicitly stated and
the discretization system \eqref{mainLinearSys} is decomposed with
respect to \eqref{subregionDecomp}, i.e., the magnitude of the
coefficient values, we arrive at the following $2 \times 2$ block
system:
\begin{equation} \label{2x2blockSys}
\left[
\begin{array}{ll}
K_{HH}(m) & K_{HL}(m) \\ K_{LH}(m) & K_{LL}(m)
\end{array}
\right]
\left[ \begin{array}{c} x_H(m) \\ x_L(m) \end{array} \right]
= \left[ \begin{array}{c} b_H \\ b_L \end{array} \right].
\end{equation}
Note that all the subblocks in \eqref{2x2blockSys} have
$m$-dependence. This is the main difference from the FE
discretization in which only the $HH$-subblock has
$m$-dependence. Hence, the perturbation analysis of the FV
discretization becomes more involved than that of the FE
discretization.

The exact inverse of $K$ can  be written as:
\begin{eqnarray}
K^{-1}(m) & = &
{ \left[ \begin{array}{cc}  I_{HH} & ~~- K_{HH}^{-1}(m) K_{HL}(m) \\
0 & I_{LL}\end{array}\right]} \nonumber \\
&& \times
{ \left[ \begin{array}{cc}    K_{HH}^{-1}(m)& 0 \\
0& ~~S^{-1}(m) \end{array}\right]} \nonumber \\
&& \times
{ \left[ \begin{array}{cc}  I_{HH} & 0 \\    -K_{LH}(m) K_{HH}^{-1}(m)& ~~I_{LL}
\end{array}\right]}, \label{eq:exact}
\end{eqnarray}
where $I_{HH}$ and $I_{LL}$ denote the
identity matrices of the appropriate dimension and
the $S(m)$ is the Schur complement of $K_{HH}(m)$ in $K(m)$ given by 
\begin{equation} \label{SchurComplement1}
S(m) = K_{LL}(m) - K_{LH}(m) K_{HH}^{-1}(m) K_{HL}(m).
\end{equation} 

Let $\cN_{HH}$ denote the Neumann matrix corresponding to the pure
Neumann problem for the Laplace operator on $\Omega_H$.  We write an
important decomposition which will be used in the analysis to come:
\begin{equation}
\label{eq:KHH}
K_{HH}(m) = m \, \cN_{HH} + \Delta(m) \ .
\end{equation}

\section{The cell-centered finite volume discretization}

We start by giving the cell-centered FV discretization for
the $5$-point stencil used in this article; for more details
see~\cite{eymardGallouetHerbin2000_fvmBook}.
Let $\cT = V_{i,j}$ with $ i,j = 1,\ldots, n^{1/2}$ be the mesh and 
the control volume be defined by
\begin{equation*}
  V_{i,j} = [x_{i-1/2}, x_{i+1/2}] \times [y_{j-1/2}, y_{j+1/2}].
\end{equation*}
The FV scheme is constructed by integrating 
\eqref{mainProblem} over each control volume $V_{i,j}$, which yields
\begin{eqnarray*}
&-& \int_{y_{j-1/2}}^{y_{j+1/2}} \alpha_{i+1/2,j} ~ u_x(x_{i+1/2},y) \, dy \\
&+& \int_{y_{j-1/2}}^{y_{j+1/2}} \alpha_{i-1/2,j} ~ u_x(x_{i-1/2},y) \, dy \\
&+& \int_{x_{i-1/2}}^{x_{i+1/2}} \alpha_{i,j-1/2} ~ u_y(x,y_{j-1/2}) \, dx \\
&-& \int_{x_{i-1/2}}^{x_{i+1/2}} \alpha_{i,j+1/2} ~ u_y(x,y_{j+1/2}) \, dx \\
&=& \int_{V_{i,j}} f(x,y) \, dxdy.
\end{eqnarray*} 

Thus, the discretization scheme is
\begin{equation*}
F_{i+1/2,j} - F_{i-1/2,j} + F_{i,j+1/2}-F_{i,j-1/2} =h_{i,j}f_{i,j},
\end{equation*}
where $h_{i,j} = k_x\times k_y$, and $f_{i,j}$ is the mean value of $f$ 
over $V_{i,j}$, and where 
\begin{eqnarray*}
F_{i+1/2,j} &=& 
-\frac{k_y}{k_x} \ \overline{h}_{\alpha_i}\left\{u(x_{i+1},y_j)-u(x_{i},y_j)\right\}, \\
F_{i,j+1/2} &=& 
-\frac{k_x}{k_y} \ \overline{h}_{\alpha_j}\left\{u(x_{i},y_{j+1})-u(x_{i},y_{j})\right\}, 
\end{eqnarray*}
and
\begin{equation} \label{harmonicMeans}
\overline{h}_{\alpha_i} = \frac{2 \, \alpha_{i,j} \, \alpha_{i+1,j}}{\alpha_{i,j}+\alpha_{i+1,j}},~~
\overline{h}_{\alpha_j} = \frac{2 \, \alpha_{i,j} \, \alpha_{i,j+1}}{\alpha_{i,j}+\alpha_{i,j+1}}.
\end{equation}
Note that $\overline{h}_{\alpha_i}$ and $\overline{h}_{\alpha_j}$ are the harmonic means of the diffusion
coefficients for the adjacent control volumes in $x$ and $y$
directions respectively. The discretization formula can be written
explicitly as follows:
\begin{eqnarray} 
&-& \overline{h}_{\alpha_i}u(x_{i+1},y_j) - \overline{h}_{\alpha_{i-1}}u(x_{i-1},y_j) \nonumber \\
&+& (\overline{h}_{\alpha_i} + \overline{h}_{\alpha_{i-1}} + \overline{h}_{\alpha_j} + \overline{h}_{\alpha_{j-1}})u(x_{i},y_j) \label{FVMdiscretization} \\
&-& \overline{h}_{\alpha_j}u(x_{i},y_{j+1}) -\overline{h}_{\alpha_{j-1}}u(x_{i},y_{j-1}) = h_{i,j}f_{i,j} \nonumber.
\end{eqnarray}

In our binary diffusivity regime, for notational convenience, we
denote the harmonic mean by
\begin{equation} \label{hMean}
\overline{h}_m := \frac{2\,m}{m+1}.
\end{equation}
The \emph{harmonic mean} is used to ensure the \emph{continuity of the
  flux} across the control volume interfaces. As a result, this flavor
of finite volume discretization enjoys the desirable property of mass
conservation.

One can write the discretization matrix entries \emph{a priori} due
to the formula \eqref{FVMdiscretization}. Hence, in 2D, we explicitly state
each contribution of the off-diagonals to the diagonal entry values
in the following:
\begin{eqnarray}
&& [K(m)]_{pp} = \label{matrixEntryValues} \\
&&
\left\{
\begin{array}{llllllll}
m & + & m & + &   m & + &   m, &~p \in I_{\Omega_{1}},\\ 
m & + & m & + &   m & + & \hm, &~p \in \Gamma_{\Omega_{1}}~\textrm{and~non-corner}, \\
m & + & m & + & \hm & + & \hm, &~p \in \Gamma_{\Omega_{1}}~\textrm{and~corner},\\
1 & + & 1 & + &   1 & + & \hm, &~p \in \Gamma_{\Omega_{2}}, \\
1 & + & 1 & + &   1 & + &   1, &~p \in I_{\Omega_{2}}~\textrm{and~strictly~interior}, \\
5, &&&&&& &~p \in I_{\Omega_{2}}~\textrm{and~non-corner~bdry}, \\
6, &&&&&& &~p \in I_{\Omega_{2}}~\textrm{and~corner~bdry}.
\end{array} \right. \nonumber
\end{eqnarray}

\subsection{Comparison of finite element and finite volume discretizations
on a 1D example}

\begin{figure}[h]
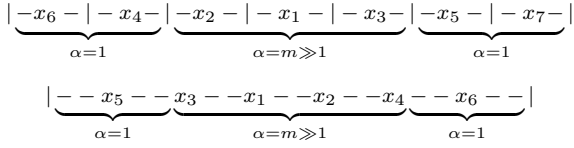

$$
|\underbrace{-x_6-|-x_4-}_{\alpha = 1}
|\underbrace{-x_2-|-x_1-|-x_3-}_{\alpha = m \gg 1}
|\underbrace{-x_5-|-x_7-}_{\alpha = 1}|
$$
$$
|\underbrace{--x_5--}_{\alpha =1}
\underbrace{x_3--x_1--x_2--x_4}_{\alpha = m\gg 1}
\underbrace{--x_6--}_{\alpha = 1}|
$$
\caption{(Top) The finite volume mesh where the cell-centers are
denoted by $x_i,~i=1, \ldots, 7.$ (Bottom) 
The finite element mesh where the vertex locations are
denoted by $x_i,~i=1, \ldots, 6.$  \label{1Dmeshes}}
\end{figure}

We explicitly provide the discretization matrix utilizing the
FV method in \eqref{FVMdiscretization} corresponding to
\eqref{mainProblem}.  The domain is chosen to be $\Omega:=(0,1)$ with
the highly-diffusive island $\Omega_H := (2/7, 5/7)$.  The mesh consists
of 7 cells. The cells and cell-centers are denoted by $V_1, \ldots, V_7$ and
$x_1, \ldots, x_7$, respectively. See Figure \ref{1Dmeshes}.

The corresponding submatrices in \eqref{2x2blockSys} are given below:
\begin{eqnarray*}
K_{HH}(m) & = & \left[
 \begin{matrix}
 2m & -m & -m \\ -m & m + \overline{h}_m & 0 \\ -m & 0 &
 m+\overline{h}_m \\
 \end{matrix} \right],\\
K_{HL}(m) & = & \left[
\begin{matrix}
0 & 0 & 0 & 0 \\ 
-\overline{h}_m & 0 & 0 & 0 \\ 
0 & -\overline{h}_m & 0 & 0 \\
\end{matrix}  \right] = K_{LH}^{ T}(m), \\
K_{LL}(m) & = & \left[
\begin{matrix}
   1+\overline{h}_m & 0 & -1 & 0 \\ 0 & 1+\overline{h}_m & 0 & -1
   \\ -1 & 0 & 3 & 0 \\ 0 & -1 & 0 & 3 \\
 \end{matrix}  \right].
\end{eqnarray*}

Moreover, from \eqref{eq:KHH}, we obtain
\begin{eqnarray*}
\mathcal{N}_{HH}(m) & = & \left[
 \begin{matrix}
 2m & -m & -m \\ -m & m & 0 \\ -m & 0 & m \\
 \end{matrix}
 \right], \\
 \Delta(m) & = & \left[
 \begin{matrix}
 0 & 0 & 0 \\ 0& \overline{h}_m & 0 \\ 0 & 0 &\overline{h}_m \\
 \end{matrix}
 \right].
\end{eqnarray*}
We readily see that the $m$-dependence of the matrices 
$K_{LH}(m), K_{HL}(m)$,  $K_{LL}(m)$, and $\Delta(m) $ is eliminated
asymptotically. For instance, 
\begin{eqnarray*}
\Delta(m) & = & \Delta^\infty + \oalp,\\
& = & 
\left[ 
\begin{matrix}
0 & 0 & 0 \\ 
0 & 2 & 0 \\ 
0 & 0 & 2
\end{matrix} 
\right] + \oalp.
\end{eqnarray*}

By maintaining a similar configuration (see Figure \ref{1Dmeshes})
used for the FV case, the corresponding submatrices in
\eqref{2x2blockSys} for the linear FE discretization are given
below. DOF that lie on the interface are always included to the DOF of
the highly-diffusive island.  Hence, only $K_{HH}^{FE}$ subblock has
$m$-dependence.

\begin{eqnarray*}
K_{HH}^{FE}(m) & = & \left[
\begin{matrix}
  2m & -m & -m & 0\\ -m & 2m & 0 &-m \\ -m & 0 &
  m+1 & 0 \\ 0 &-m & 0 & m+1 \\
\end{matrix} \right],\\
K_{HL}^{FE} & = & \left[
\begin{matrix}
  0 & 0 \\ 0 & 0 \\ -1 & 0 \\ 0 &-1 \\
\end{matrix} \right] = K_{LH}^{FE^T},\\
K_{LL}^{FE} & = & \left[
\begin{matrix}
  2 & 0 \\ 0 & 2 \\
\end{matrix} \right].
\end{eqnarray*}

\begin{eqnarray*}
m\mathcal{N}_{HH}^{FE} & = & \left[
\begin{matrix}
  2m & -m & -m & 0\\ -m & 2m & 0 &-m \\ -m & 0 &
  m & 0 \\ 0 &-m & 0 & m \\
\end{matrix}
\right],\\
\Delta^{FE} & = & \left[
\begin{matrix}
0 & 0 & 0 & 0 \\ 
0 & 0 & 0 & 0 \\ 
0 & 0 & 1 & 0 \\ 
0 & 0 & 0 & 1
\end{matrix}
\right].
\end{eqnarray*}

\section{Spectral analysis of the diagonally scaled finite volume 
discretization matrix} \label{sec:diagScaling}

Let $\{1, \ldots, s\}$ denote the index of islands in the domain.  Let
$N_{k},~ k = 1, \ldots, s$ be the FV discretization matrix of
$-\Delta$ on the k-th island $\Omega_{k}$ with respect to $\cT_{i}$
with homogeneous pure Neumann boundary condition.  Let $\mathcal{C}$ denote
the set of all DOF in $\Omega$.  We will analyze the behavior of the
spectrum of the symmetrically scaled discretization matrix
\begin{equation}
\label{diagscaled}
A(m) := 
\left( \textrm{diag} K(m) \right)^{-1/2} K(m) 
\left( \textrm{diag} K(m) \right)^{-1/2}.
\end{equation}
We start by examining the dependence of the $(p, q)$-th entry of
$K(m)$ on $m$, for each $p \in \mathcal{C}$.
Let $\Omega_{s+1}$ denote the outer region of islands, i.e., 
\begin{equation*}
\Omega_{s+1} = \Omega \setminus \bigcup_{k=1, \ldots, s} \overline{\Omega}_k.
\end{equation*}
We denote the cell-centers that are adjacent to
$\Omega_{s+1}$ and the ones that are interior to
$\Omega_{k},~k=1, \ldots, s$ by $\Gamma_{\Omega_{k}}$ and
$I_{\Omega_{k}}$, respectively. On the other hand, the cell-centers in
the outer region $\Omega_{s+1}$ that are adjacent to the islands
$\Omega_{k}, k=1, \ldots, s$ are denoted by $\Gamma_{\Omega_{s+1}}$.

We define the following index set for $p, q \in \mathcal{C}$ with $p \ne q$: 
\begin{eqnarray*}
\mathcal{I}(p \wedge q) & := &
\begin{cases}
k,   & p~\text{and}~q \in \Omega_{k},~k=1, \ldots, s \\ 
s+1, & p~\text{or}~ q \in \Omega_{s+1},
\end{cases}\\
\mathcal{I}(p) & := &
\begin{cases}
k,      & p \in I_{\Omega_{k}},~k=1, \ldots, s \\ 
s+1, k, & p \in \Gamma_{\Omega_{k}},~k=1, \ldots, s \\ 
s+1,    & p \in \Omega_{s+1}.
\end{cases}
\end{eqnarray*}
Also, we define $\mathcal{I}(p \wedge p) = \mathcal{I}(p)$.

Note that the discretization matrix and its entries can be written as follows:
\begin{equation*}
K(m) = \sum_{k=1}^s m N_k(1) + N_{s+1}(m),
\end{equation*}
\begin{equation*}
[K(m)]_{pq} = \sum_{\ell \in \mathcal{I}(p \wedge q)}
\alpha_{\ell}(m) [N_{\ell}]_{pq},
\end{equation*} 
where  
\begin{equation}
\alpha_{\ell}(m) = 
\begin{cases}
m, & \ell=1, \ldots, s \\
1, & \ell = s+1,
\end{cases}
\end{equation}
and by abuse of notation,
we have defined $N_{\ell} := N_{\ell}(1)$ for 
$\ell=1, \ldots, s$ and $N_{s+1} := N_{s+1}(m)$. 
Then, 
\begin{eqnarray*}
& & [A(m)]_{pq} \\
& & = [K(m)]_{pp}^{-1/2}~[K(m)]_{pq}~
[K(m)]^{-1/2}_{qq}, \nonumber\\  
& &  = 
\left\{ \sum_{\ell \in \mathcal{I}(p)} \alpha_{\ell}(m) [N_{\ell}]_{pp}\right\}^{-1/2}
\sum_{\ell \in \mathcal{I}(p\wedge q)} \alpha_{\ell}(m) [N_{\ell}]_{pq} \\
&& 
~~\times \left\{ \sum_{\ell \in \mathcal{I}(q)} \alpha_{\ell}(m) [N_{\ell}]_{qq}
\right\}^{-1/2}. \label{sa}
\end{eqnarray*}
For $p \in \mathcal{C}$, $[A(m)]_{pq} = 0$ if $p, q$ are not adjacent
cell-centers and $[A(m)]_{pp} = 1$. Furthermore note that, 
$[A(m)]_{pq}$ is $m$-dependent only if either $p$
or $q \in \bigcup_{k=1, \ldots, s+1} \Gamma_{\Omega_k}$.


It is sufficient to study only the single island case because single
island expression for 
\begin{equation} \label{K_Decomp}
K(m) = m N_1 + N_{2}(m)
\end{equation}
can be simply generalized to
\begin{equation*}
K(m) = \sum_{k=1}^s m N_k + N_{s+1}(m).
\end{equation*}

\begin{table*}[htbp]
\caption{A 2D example showing the condition numbers and eigenvalues of 
the finite volume discretization matrix $K(m)$ and its diagonally scaled
version $A(m)$ in which the eigenvalues are sorted in ascending order.} 
\label{tbl:diagScalingEffect}
$$\begin{array}{llllllllll}
  \hline \\
  m &~~ \kappa(K(m)) & \lambda_1(K(m)) & \lambda_{61}(K(m)) & 
 \lambda_{62}(K(m)) &\lambda_{64}(K(m))  & \kappa(A(m)) & \lambda_1(A(m)) &
 \lambda_2(A(m)) & \lambda_{64}(A(m))  \\[1ex]
  \hline \hline \\
  10^0 &~~ 2.627\times10^1 &~~ 3.045\times10^{-1} &~~ 7.695\times10^{0} &~~
 7.848\times10^{0} &~~ 8.000\times10^{0} &~~  2.553\times10^1 &~~ 7.538\times10^{-2} &~~ 1.800\times10^{-1}
 &~~ 1.925\times10^{0} \\[1ex]
  10^2 &~~ 1.248\times10^3 &~~ 3.237\times10^{-1} &~~ 7.995\times10^{0} &~~
 2.040\times10^2 &~~   4.040\times10^2 &~~ 3.448\times10^2 &~~
 5.784\times10^{-3} &~~  1.362\times10^{-1} &~~ 1.994\times10^{0} \\[1ex]
  10^4 &~~ 1.235\times10^5 &~~ 3.240\times10^{-1} &~~ 8.000\times10^{0} &~~ 
2.000\times10^4 &~~   4.000\times10^4 &~~ 3.258\times10^4 &~~
 6.139\times10^{-5} &~~   1.346\times10^{-1} &~~ 1.999\times10^{0}  \\[1ex]
  10^6 &~~ 1.235\times10^7 &~~ 3.240\times10^{-1} &~~ 8.000\times10^{0} &~~ 
2.000\times10^6 &~~  4.000\times10^6 &~~ 3.256\times10^6 &~~ 
6.142\times10^{-7} &~~   1.346\times10^{-1} &~~  2.000\times10^{0}  \\[1ex]
  10^8 &~~ 1.235\times10^9 &~~ 3.240\times10^{-1} &~~ 8.000\times10^{0} &~~
 2.000\times10^8 &~~   4.000\times10^8 &~~ 3.256\times10^8 &~~
 6.143\times10^{-9} &~~   1.346\times10^{-1} &~~ 2.000\times10^{0}  \\[1ex]
  10^{10} &~~1.235\times10^{11} &~~ 3.240\times10^{-1} &~~ 8.000\times10^{0} 
&~~ 2.000\times10^{10} &~~  4.000\times10^{10} &~~ 3.256\times10^{10}
 &~~ 6.143\times10^{-11} &~~   1.346\times10^{-1} &~~ 2.000\times10^{0} \\[1ex]
  \hline
\end{array}$$
\end{table*}

\subsection{Perturbation expansion analysis for the upper bound of
the smallest eigenvalue}

We devise a perturbation expansion analysis based on $m$, in order to
study the $m$-dependent spectral behavior of $A(m)$. Hence, only the
matrix entries $A(m)_{pq},~p \ne q$, that have $m$-dependence are
considered where cells $p$ and $q$ are adjacent.  Combining
\eqref{matrixEntryValues} and \eqref{K_Decomp}, one can deduce that
\begin{equation} \label{N_2Decomp0}
N_2(m) = N_2^\infty + \cO(m^{-1}).
\end{equation}
We will use \eqref{N_2Decomp0} in the below analysis.  For clarity, we
treat the perturbation expansion in full detail for the first case.

\noindent {\bf Case 1:} $p \in \Gamma_{\Omega_{1}}$ and $q \in \Omega_{2}$,
\begin{eqnarray*}
& &[A(m)]_{pq}  \\
& & = \left\{ m [N_{1}]_{pp} + [N_{2}(m)]_{pp} \right\}^{-1/2} 
[N_{2}(m)]_{pq} \left\{ [N_{2}(m)]_{qq} \right\}^{-1/2} \\ 
& & =  m^{-1/2} \left\{ [N_{1}]_{pp} +  m^{-1} [N_{2}(m)]_{pp} \right\}^{-1/2}
[N_{2}(m)]_{pq} \\
&& ~~\times \left\{ [N_{2}(m)]_{qq} \right\}^{-1/2} \\ 
& & = m^{-1/2} \left\{ ~m^{-1} [N_1]_{pp}^{-1/2} -1/2 [N_1]_{pp}^{-3/2}~
[N_2(m)]_{pp} \right.\\
&& ~~\left. +~\cO(m^{-2}) \right\} \left\{ [N_2(m)]_{pq} \right\} 
\left\{ [N_2(m)]_{qq} \right\}^{-1/2} \\ 
& & = m^{-1} \left\{ [N_1]_{pp}^{-1/2} + \cO(m^{-1}) \right\} 
\left\{ [N_2^\infty]_{pq} + \cO(m^{-1}) \right\}\\
&& ~~\times \left\{ m^{-1} [N_2^\infty]_{qq} + \cO(m^{-2}) \right\}^{-1/2}\\ 
& & = \cO(m^{-1/2})
\end{eqnarray*}

\noindent {\bf Case 2:} $p \in \Gamma_{\Omega_{1}}$ and 
$q \in \Gamma_{\Omega_{1}}$,
\begin{eqnarray*}
& &[A(m)]_{pq} \\
& & = \left\{ m [N_{1}]_{pp} + [N_{2}(m)]_{pp} \right\}^{-1/2} 
[mN_{1}]_{pq} \\
&& ~~\times \left\{ m [N_{1}]_{qq} + [N_{2}(m)]_{qq} \right\}^{-1/2}\\ 
& & = \left\{ [N_{1}]_{pp} + m^{-1}[N_{2}(m)]_{pp} \right\}^{-1/2} 
[N_{1}]_{pq} \\
&& ~~\times \left\{ [N_{1}]_{qq} + m^{-1} [N_{2}(m)]_{qq} \right\}^{-1/2}\\
& & = [N_{1}]_{pp}^{-1/2}[N_{1}]_{pq} [N_{1}]_{qq}^{-1/2} + \cO(m^{-1})
\end{eqnarray*} 

\noindent {\bf Case 3:} $p \in \Gamma_{\Omega_{1}}$ and $q \in I_{\Omega_{1}}$,
\begin{eqnarray*}
& &[A(m)]_{pq} \\
& & = \left\{ m [N_{1}]_{pp} + [N_{2}(m)]_{pp} \right\}^{-1/2} 
[mN_{1}]_{pq} \left\{ m [N_{1}]_{qq} \right\}^{-1/2} \\ 
& & = \left\{ [N_{1}]_{pp} + m^{-1}[N_{2}(m)]_{pp} \right\}^{-1/2} 
[N_{1}]_{pq} \left\{ [N_{1}]_{qq} \right\}^{-1/2}\\
& & = 
[N_{1}]_{pp}^{-1/2} [N_{1}]_{pq} [N_{1}]_{qq}^{-1/2} +
\cO(m^{-1})
\end{eqnarray*} 

\noindent {\bf Case 4:} $p \in \Gamma_{\Omega_{2}}$ and $q \in \Omega_{2}$,
\begin{eqnarray*}
& & [A(m)]_{pq} \\
& & = \left\{ [N_{2}(m)]_{pp} \right\}^{-1/2} 
[N_{2}(m)]_{pq} \left\{ [N_{2}(m)]_{qq} \right\}^{-1/2} \\ 
& & = \left\{ m^{-1}[N_{2}(m)]_{pp} \right\}^{-1/2} 
m^{-1} [N_{2}(m)]_{pq} \\
&  & ~~\times \left\{ m^{-1}[N_{2}(m)]_{qq} \right\}^{-1/2}\\ 
& & = 
\left\{ m^{-1} [N_2^\infty]_{pp} + \cO(m^{-2}) \right\}^{-1/2}
\left\{ m^{-1} [N_2^\infty]_{pq} + \cO(m^{-2}) \right\} \\
&& ~~\times \left\{ m^{-1} [N_2^\infty]_{qq} + \cO(m^{-2}) \right\}^{-1/2} \\
& & = [N_2^\infty]_{pp}^{-1/2} [N_{2}^\infty]_{pq} [N_{2}^\infty]^{-1/2}_{qq} + \cO(m^{-1}).
\end{eqnarray*} 

\begin{remark}
Numerically we observe that the smallest eigenvalue is $\cO(m^{-1});$
see Table~\ref{tbl:diagScalingEffect}.  In the above analysis, all the
cases except Case 1 give the same $\cO(m^{-1})$.  Case 1 result of
$\cO(m^{-1/2})$ estimate is an artifact of the perturbation expansion.
The same artifact has appeared in the FE analysis; see
\cite[equ. (5.14)]{GrHa:99}.
\end{remark}

We will use following modification of $N_2$
for our further analysis:
\begin{eqnarray}
\label{N2tilde}
[\tilde{N_2}]_{pq} =
\begin{cases}
0 & \,\text{ if } \, p \in \Gamma_{\Omega_1} \\ [N_2^\infty]_{pq} & \,\text{
  otherwise}. \\
\end{cases}
\end{eqnarray}
Consider the reduced version of ~\eqref{K_Decomp}
\begin{equation*}
\tilde{K}(m) = mN_1 + \tilde{N_2}, 
\end{equation*} 
and let $\tilde{A}(m)$ denote the diagonally scaled version of
$\tilde{K}(m)$. Then $m$-independent $\tilde{A}(m)$ has a single zero
\eig. Next, we proceed with the elementwise analysis of $A(m) -
\tilde{A}(m)$.

\noindent {\bf Case 1:} $p \in \Gamma_{\Omega_1}$ and $q \in \Omega_{2}$,
\begin{equation*}
  [A(m)]_{pq} = m^{-1/2}~[N_1]_{pp}^{-1/2}~[N_{2}^\infty]_{pq}~[N_{2}^\infty]^{-1/2}_{qq} + 
  \cO(m^{-3/2})
\end{equation*}
and from ~\eqref{N2tilde} we get
\begin{equation*}
[\tilde{A}(m)]_{pq} = 0. 
\end{equation*} 
Therefore,
\begin{equation*}
  [A(m)]_{pq} - [\tilde{A}(m)]_{pq} = \cO(m^{-1/2}).
\end{equation*}

\noindent {\bf Case 2:} $p \in \Gamma_{\Omega_1}$ and $q \in \Omega_{1}$,
\begin{equation*}  
  [A(m)]_{pq} = [N_{1}]_{pp}^{-1/2}~[N_{1}]_{pq}~[N_{1}]_{qq}^{-1/2} + \cO(m^{-1})
\end{equation*} 
and from ~\eqref{N2tilde} we get
\begin{equation*}
  [\tilde{A}(m)]_{pq} = [N_{1}]_{pp}^{-1/2}~[N_{1}]_{pq}[N_{1}]_{qq}^{-1/2}
\end{equation*} 
Thus,
\begin{equation*}
  [A(m)]_{pq} - [\tilde{A}(m)]_{pq} = \cO(m^{-1}).
\end{equation*}

\noindent {\bf Case 3:} $p \in \Gamma_{\Omega_{2}}$(nodes of $\Omega_2$),
\begin{equation*}
  [A(m)]_{pq} = [N_2^\infty]_{pp}^{-1/2}~[N_{2}^\infty]_{pq}~[N_{2}^\infty]^{-1/2}_{qq} + \oalp
\end{equation*}
and from ~\eqref{N2tilde}  we get
\begin{equation*} 
  [\tilde{A}(m)]_{pq} = [N_2^\infty]_{pp}^{-1/2}~[N_{2}^\infty]_{pq}~[N_{2}^\infty]^{-1/2}_{qq}.
\end{equation*}
Thus,
\begin{equation*}
  [A(m)]_{pq} - [\tilde{A}(m)]_{pq} = \cO(m^{-1}).
\end{equation*}

\noindent {\bf Case 4:} Otherwise,
\begin{equation*}
    [A(m)]_{pq} - [\tilde{A}(m)]_{pq} = 0.\\
\end{equation*}

Therefore, $\lambda_{\max} (A(m)-\tilde{A}(m)) = \cO(m^{-1/2})$.
\begin{lemma} \label{eig}
Let $G$ and $H$ be symmetric matrices of dimension $n \times n$. Then,
for  $k = 1, \dots, n$, the following holds:
\begin{equation*}
\lambda_k(G) + \lambda_{\min}(H) \leq \lambda_k(G+H) \leq
\lambda_k(G) +\lambda_{\max}(H). 
\end{equation*}
\end{lemma}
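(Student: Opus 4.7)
The plan is to invoke the Courant--Fischer min--max characterization of eigenvalues of symmetric matrices and deduce the two bounds as a direct consequence, together with a symmetry trick that recovers the lower bound from the upper bound. Since the table in the paper orders eigenvalues in ascending fashion ($\lambda_1 \leq \lambda_2 \leq \cdots \leq \lambda_n$), I will adopt the same convention and use
\begin{equation*}
\lambda_k(M) = \min_{\substack{S \subset \mathbb{R}^n \\ \dim S = k}} \max_{\substack{x \in S \\ \|x\|=1}} x^{T} M x
\end{equation*}
for any symmetric $M$.

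For the upper bound, I would start from the min--max formula for $\lambda_k(G+H)$ and split the quadratic form additively: for any $k$-dimensional subspace $S$ and any unit vector $x \in S$, $x^{T}(G+H)x = x^{T}Gx + x^{T}Hx \leq x^{T}Gx + \lambda_{\max}(H)$, since $\lambda_{\max}(H)$ bounds the Rayleigh quotient of $H$ on the whole unit sphere. Maximizing over $x \in S$ and then minimizing over $S$ yields $\lambda_k(G+H) \leq \lambda_k(G) + \lambda_{\max}(H)$.

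For the lower bound, rather than redo the analysis with the dual max--min formula, I would apply the already-established upper bound to the pair $(G+H, -H)$, observing that $G = (G+H) + (-H)$. This gives
\begin{equation*}
\lambda_k(G) \leq \lambda_k(G+H) + \lambda_{\max}(-H) = \lambda_k(G+H) - \lambda_{\min}(H),
\end{equation*}
which rearranges to $\lambda_k(G) + \lambda_{\min}(H) \leq \lambda_k(G+H)$, as required.

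There is no real obstacle here; the statement is essentially Weyl's inequality, and the entire argument rests on the separability of the Rayleigh quotient of a sum together with the elementary identity $\lambda_{\max}(-H) = -\lambda_{\min}(H)$. The only point to be careful about is consistency of the eigenvalue ordering convention throughout, so that the same min--max formula is used on both sides of the inequality when invoking it twice.
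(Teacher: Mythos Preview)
Your argument is correct and is essentially the same approach as the paper's: the paper simply cites the Courant--Fischer minimax theorem (pointing to \cite[Corollary~8.1.3]{GoVL89}) without spelling out the details, while you have written out the standard Rayleigh-quotient splitting and the $(G+H,-H)$ symmetry trick that underlie that citation. There is nothing to add.
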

\begin{proof}
The result follows from Courant-Fischer minimax Theorem; see
\cite[Corollary 8.1.3]{GoVL89}. \qed
\end{proof} 

From Lemma ~\ref{eig}, we have
\begin{eqnarray*}
\lambda_{\min}(A(m)) 
& \leq & \lambda_{\min}(\tilde{A}(m)) + \lambda_{\max}(A(m) - \tilde{A}(m)) \\ 
&    = & \lambda_{\max}(A(m) - \tilde{A}(m)) \\
&    = & \cO(m^{-1/2})
\end{eqnarray*}
Moreover, we have for all $k \geq 1,$
\begin{eqnarray*}
\lambda_{k}(A(m)) 
& \geq & \lambda_{k}(\tilde{A}(m)) + \lambda_{\min}(A(m) - \tilde{A}(m)) \\ 
& \geq & \lambda_{k}(\tilde{A}(1)) -\cO(m^{-1}) \geq C
\end{eqnarray*} 
for some constant $C$ independent of $m$, asymptotically.  Thus,
$A(m)$ has a single \eig~ approaching to zero while the rest \eigs~is
bounded away from $0$. 
 
\subsection{Lower bound for the smallest \eig}
We aim to show the following lower bound for the smallest eigenvalue:
\begin{equation} \label{lowerBound}
\lambda_{\min}(A(m)) \geq C~m^{-1}.
\end{equation}
For that, we will establish the below main estimates in
the discussion to follow:
\begin{equation} \label{lowerBoundEst1}
x^TK(m)x \geq x^TK(1)x \geq m^{-1} x^TK(m)x.
\end{equation}
 
\begin{remark}
Establishing the estimate \eqref{lowerBoundEst1} is not as simple as
in the FE discretization case due to $m$-dependence of $N_2$ in
\eqref{K_Decomp}.  This requires further detailed matrix analysis.
\end{remark}

\subsubsection{$x^TK(m)x \geq x^TK(1)x$ estimate}

For the decomposition in \eqref{K_Decomp}, it is straightforward to
see that $m N_1 \geq N_1$ for $m \geq 1$. Hence, in order to establish
\eqref{lowerBoundEst1}, we concentrate on the following auxiliary estimate:
\begin{equation} \label{auxEstimateN_2}
x^T N_2(m)x \geq x^T N_2(1)x.
\end{equation}

First, note that $N_2(m)$ has positive diagonal and negative
off-diagonal entries. In addition, due to the discretization formula
\eqref{FVMdiscretization}, it has a row sum equal to zero with the
exception that the row sums corresponding to cell-centers that are 
adjacent to the boundary are positive.  Hence, $N_2(m)$ is a
diagonally dominant matrix. We further decompose $N_2(m)$ as
follows:
\begin{equation} \label{N_2Decomp}
N_2(m) = \bar{N}_2(m) + R_2,
\end{equation} 
where $\bar{N}_2(m)$ is a symmetric matrix with positive diagonal
entries and a row sum equal to zero, and $R_2 := N_2(m) - \bar{N}_2(m)$ is
the remainder matrix.

\begin{lemma} \label{gerschgorin}
Let $G$ be a symmetric matrix and have a row sum equal to zero. In
addition, let $[G]_{pp} \geq 0$, then $G$ is symmetric positive
semi-definite (SPSD).
\end{lemma}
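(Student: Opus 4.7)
The plan is to invoke Gerschgorin's circle theorem, as the label of the lemma suggests. Since $G$ is real symmetric, every eigenvalue $\lambda$ is real and, by Gerschgorin, for some row index $p$ satisfies
\[
|\lambda - [G]_{pp}| \le \sum_{q \neq p} |[G]_{pq}|.
\]
In the intended application, $G = \bar{N}_2(m)$ inherits nonpositive off-diagonals from the underlying FV discretization of $-\Delta$ (the off-diagonal weights in \eqref{FVMdiscretization} are negatives of harmonic means and hence $\le 0$), so $|[G]_{pq}| = -[G]_{pq}$ for $q \neq p$. Combining this sign information with the zero row sum hypothesis gives
\[
\sum_{q\neq p}|[G]_{pq}| \;=\; -\sum_{q\neq p}[G]_{pq} \;=\; [G]_{pp}.
\]
Plugging back into Gerschgorin yields $|\lambda - [G]_{pp}| \le [G]_{pp}$, which together with $[G]_{pp} \ge 0$ forces $0 \le \lambda \le 2[G]_{pp}$, proving SPSD.

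An essentially equivalent but slicker route is to work with the quadratic form directly. Using $[G]_{pp} = -\sum_{q\neq p}[G]_{pq}$ from the zero-row-sum assumption and the symmetry of $G$, the standard rearrangement
\[
x^T G x \;=\; \sum_p [G]_{pp} x_p^2 + 2\sum_{p<q} [G]_{pq} x_p x_q \;=\; -\sum_{p<q} [G]_{pq}(x_p - x_q)^2
\]
is manifestly nonnegative as soon as $[G]_{pq} \le 0$ for $p\neq q$. I would lean toward this quadratic-form proof because the identity it produces is precisely what is needed for the downstream auxiliary estimate \eqref{auxEstimateN_2} and for the subsequent bound $x^T K(m) x \ge x^T K(1) x$.

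The main obstacle is really just a bookkeeping one: the statement as written does not explicitly list the sign condition on off-diagonals, and without it $2\times 2$ and $3\times 3$ counterexamples with mixed-sign off-diagonals are easy to construct. So the proof proposal is to carry the nonpositivity of off-diagonals in as a silent hypothesis honored by $\bar{N}_2(m)$, and then either of the two lines above closes the argument in a couple of displays. No deeper tool is needed beyond Gerschgorin (or equivalently the edge-by-edge decomposition of the quadratic form).
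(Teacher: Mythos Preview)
Your Gerschgorin argument is exactly the paper's proof: the paper writes $|\lambda_p - [G]_{pp}| \le \sum_{q\ne p}|[G]_{pq}| = [G]_{pp}$ and concludes $0\le\lambda_p\le 2[G]_{pp}$, identical to your first display. Your diagnosis of the hidden hypothesis is also on point: the equality $\sum_{q\ne p}|[G]_{pq}| = [G]_{pp}$ is asserted in the paper without comment, and it indeed requires the nonpositive off-diagonals that $\bar N_2(m)$, $R_2$, $\bar R_2(m)$, and $\widehat R_2(m)$ all inherit from the discretization \eqref{FVMdiscretization}; so your ``silent hypothesis'' reading matches how the lemma is actually used.

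Your quadratic-form alternative, $x^TGx = -\sum_{p<q}[G]_{pq}(x_p-x_q)^2$, is not in the paper but is a legitimate substitute under the same sign assumption. It has the minor advantage you note---the edge-by-edge identity is directly reusable when comparing $x^T\bar N_2(m)x$ with $x^T\bar N_2(1)x$---but the paper does not exploit this; it instead reapplies the Gerschgorin lemma to each remainder matrix separately.
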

\begin{proof}
Let $\lambda_{p}$ be the $p$-th \eig~of $G$. Using Gerschgorin's Theorem
with the fact that $G$ has a row sum equal to zero yields:
\begin{equation*}
|\lambda_p - [G]_{pp}| \leq \sum_{q \ne p}|[G]_{pq}| = [G]_{pp}.
\end{equation*}
The result follows from $0 \leq \lambda_p \leq 2[G]_{pp}.$ 
\qed
\end{proof}

By Lemma \ref{gerschgorin}, $\bar{N}_2(m)$ is immediately SPSD. 
In addition, $R_2$ is a diagonally dominant matrix with non-negative 
diagonal entries, again by  Lemma \ref{gerschgorin}, $R_2$ is SPSD.  
Now, we can conclude that $N_2(m)$ is SPSD.

From \eqref{matrixEntryValues}, one observes that $[K(m)]_{pp}$ is 
monotonically increasing in $m$.
This important property implies that
\begin{equation} \label{N2barMonotonic}
[\bar{N}_2(m)]_{pp} \geq [\bar{N}_2(1)]_{pp},~~m \geq 1.
\end{equation}
We need the following additional decomposition:
\begin{equation} \label{N_2barDecomp}
\bar{N}_2(m) = \bar{N}_2(1) + \bar{R}_2(m).
\end{equation}
Combining \eqref{N2barMonotonic} and \eqref{N_2barDecomp}, we obtain
$[\bar{R}_2(m)]_{pp} \geq 0$.  Hence, Lemma~\ref{gerschgorin} implies
that $\bar{R}_2(m)$ is SPSD.  Using this, we now arrive at the auxiliary
estimate \eqref{auxEstimateN_2} in the following:
\begin{eqnarray*}
x^TN_2(m)x & = & x^T\bar{N}_2(1)x +x^T \bar{R}_2(m) x + x^TR_2x \\
& \geq &  x^T\bar{N}_2(1)x + x^TR_2x \\
& = &  x^TN_2(1)x.
\end{eqnarray*}
Consequently,
\begin{eqnarray*}
mx^TK(m)x & = & m x^TN_1x + x^TN_2(m)x \\
& \geq & x^TN_1x + x^TN_2(m)x \\ 
&\geq & x^TN_1x + x^TN_2(1)x  \\ 
& = & x^TK(1)x.
\end{eqnarray*}

\subsubsection{$mx^TK(1)x \geq x^TK(m)x$ estimate}

Combining \eqref{K_Decomp} and \eqref{N_2Decomp},  we obtain
\begin{equation*}
K(m) = mN_1+ \bar{N}_2(m) + R_2,
\end{equation*} where $N_1$ and $R_2$ are SPSD and independent of $m$,
which yields the following for $m\geq 1$:
\begin{eqnarray*}
mx^TN_1x & \geq & x^TN_1x,\\
mx^TR_2x & \geq & x^TR_2x,
\end{eqnarray*}
Thus, it is sufficient to establish the auxiliary estimate
\begin{equation} \label{auxEstimateN_2Bar}
m x^T \bar{N}_2(1)x \geq x^T \bar{N}_2(m)x,
\end{equation}
in order to establish \eqref{lowerBoundEst1}.

By using Remark ~\ref{matrixEntryValues}, one can also show that 
\begin{equation} \label{mN2barMonotonic}
m [\bar{N}_2(1)]_{pp} \geq [\bar{N}_2(m)]_{pp}.
\end{equation}
We will use the following decomposition:
\begin{equation} \label{mN2Decomp}
m \bar{N}_2(1) = \bar{N}_2(m) +\widehat{R}_2(m),
\end{equation}
where $\widehat{R}_2(m)$ is the SPSD remainder matrix.
Combining \eqref{mN2barMonotonic} and \eqref{mN2Decomp},
we obtain $[\widehat{R}_2(m)]_{pp} \geq 0$, which leads us to the auxiliary
estimate \eqref{auxEstimateN_2Bar}.

Hence,
\begin{eqnarray*} 
mx^TK(1)x & = & mx^TN_1x + mx^TN_2(1)x \\
& \geq & mx^T N_1 x + x^TN_2(m)x\\
& = & x^TK(m)x.
\end{eqnarray*}

In conclusion, we have obtained the two main estimates in
\eqref{lowerBoundEst1}. These yield the similar estimates
for $\textrm{diag}~K(m)$. 
\begin{eqnarray}
x^T \textrm{diag}~K(m)x & \geq & x^T \textrm{diag}~K(1)x \nonumber \\
& \geq & m^{-1} x^T \textrm{diag}~K(m)x \label{lastStep2}
\end{eqnarray}
From \eqref{lowerBoundEst1} and \eqref{lastStep2}, and letting 
$C_1:= \lambda_{\min}(A(1))$, we get:
\begin{eqnarray*} 
x^TK(m)x & \geq & x^TK(1)x \geq C_1 x^T \textrm{diag}~K(1)x \\
& \geq & C_1~m^{-1} x^T \textrm{diag}~K(m)x,
\end{eqnarray*}
yielding
\begin{equation*}
C_1~m^{-1} \leq \lambda_{\min}(A(m)) \leq C_2~m^{-1/2}.
\end{equation*}

\begin{remark}
The merit of diagonal scaling becomes apparent after studying the
spectral behavior of $A(m)$.  We observe that the number of the
\eigs~of $A$ of $\cO(m^{-1})$ depends only on the number of isolated
islands. On the other hand, the number of the \eigs~of $K(m)$ of
$\cO(m)$ depends on the number of DOF of the islands,
asymptotically. In the example we report in Table
\ref{tbl:diagScalingEffect}, $K(m)$ has $3$ \eigs~approaching to
$\infty$ and $61$ bounded \eigs, whereas, $A(m)$ has only one
\eig~approaching to zero. The reduction in the number of $m$-dependent
eigenvalues is a desirable feature for fast convergence of Krylov
subspace solvers.
\end{remark}

\section{Singular perturbation analysis on matrix entries}
\label{mainResultsSPA}

\subsection{Preliminaries on matrix properties}

The discretization formula \eqref{FVMdiscretization} with the harmonic
means \eqref{harmonicMeans} gives rise to the matrix entries given in
\eqref{matrixEntryValues}. Due to harmonic means, the $m$-dependence
of the matrix entries corresponding to $HL$ and $LL$ couplings is
asymptotically eliminated.  As a result, the submatrices $K_{HL}(m)$
and $K_{LL}(m)$ do not have $m$-dependence asymptotically:
\begin{eqnarray}
  K_{HL}(m) & = & K_{HL}^\infty + \cO(m^{-1}), \nonumber \\
  K_{LH}(m) & = & K_{HL}^{\infty^T} + \cO(m^{-1}), \label{mDependenceElim}\\
  K_{LL}(m) & = & K_{LL}^\infty + \cO(m^{-1}) \nonumber.
\end{eqnarray}
The analysis in this article mainly relies on this
crucial property. 

To analyze the $m$-robustness of preconditioners based on
\eqref{eq:exact}, we need to analyze the asymptotic behaviour of the
block components $K_{HH}(m)^{-1}$, $S(m)^{-1}$, and\\
$K_{LH}(m) K_{HH}(m)^{-1}$ as $m \rightarrow \infty$.  This is the purpose
of Lemma \ref{lemma:Main} below.  To prepare for this, we further
decompose DOF associated with $\overline{\Omega}_H$
into a set of interior DOF associated with index $I$ and boundary
DOF with index $\Gamma$. This leads to the following further
block representation of
\begin{equation} \label{K_HH_blockwise} 
   K_{HH}(m) \ = \
            \left[ \begin{array}{cc}
               K_{II}(m) & K_{I \Gamma}(m) \\
              K_{\Gamma I}(m) & K_{\Gamma \Gamma}(m)
                   \end{array} \right].
\end{equation}
By using \eqref{matrixEntryValues}, we can write a more refined
expression for the block $K_{\Gamma \Gamma}(m)$:
\begin{equation*}
K_{\Gamma \Gamma}(m) = K^{(H)}_{\Gamma \Gamma}(m) + 
\hm D^{(L)}_{\Gamma\Gamma},
\end{equation*}
with
\begin{equation} \label{diagonalDelta}
D^{(L)}_{\Gamma\Gamma} := 
\textrm{diag}(0, \ldots,0, 1, \ldots, 1, 2, \ldots, ,2),
\end{equation}
where the number of $0$, $1$, and $2$ entries is equal to the cardinality
of interior, non-corner interface, and corner interface cell-centers,
respectively. Thus, we can characterize the Neumann matrix
$\mathcal{N}_{HH}$ on $\Omega_H$ as described in (\ref{eq:KHH}) as
follows:
\begin{eqnarray}
  K_{HH}(m) & = &
  m \mathcal{N}_{HH} \
  + \ \Delta(m) \label{eq:Neumann} \,,\\
  \Delta(m) & = & \hm
  \left[ \begin{array}{cc}
      0 & 0 \\
      0 & D^{(L)}_{\Gamma \Gamma}
    \end{array} \right] , \label{deltaDecomp}\\
  & = & \Delta^\infty + \oalp \label{mDependenceDelta},
\end{eqnarray}
where $\Delta(m)$ is a diagonal matrix due to \eqref{diagonalDelta}.
$\mathcal{N}_{HH}$ is a SPSD matrix with a simple zero eigenvalue and
associated constant eigenvector. If $n_H$ denotes the number of DOF in
$\Omega_H$, a suitable normalized eigenvector is the constant vector
with entries $n_H^{-1/2}$, which we denote by $e_H$.  We further write
in block form as $e_H^T = ( e_I^T \ , \ e_{\Gamma}^T)$.

\begin{remark}
  The Neumann matrix in the FV discretization is completely different
  than that of the FE case.  However, as expected, the simple zero
  eigenvalue and the associated constant eigenvector are maintained
  because this is an inherent property of the underlying PDE. In
  addition, $\Delta$ in \eqref{eq:Neumann} in the FE discretization is
  not necessarily diagonal and does not have $m$-dependence.
\end{remark}

Finally we note that the off-diagonal blocks in \eqref{2x2blockSys}
have the decomposition:
\begin{equation}
  \label{K_HL_blockwise}
  K_{LH}(m) \ = \ \left[ \begin{array}{cc}
      0 & K_{L\Gamma}(m)
    \end{array} \right] \ = \ K_{HL}(m)^T.
\end{equation}

As we prepare for the proof of our main Lemma, we need to define
the following quantity which will also be used for subdomain
deflation in \eqref{deflationProj}:
\begin{equation} \label{etaExpression}
\eta(m) := e_H^T K_{HH}(m)e_H.
\end{equation}
$\eta(m) > 0$ because $K_{HH}(m)$ is SPD as a diagonal subblock of
$K(m)$. Moreover, combining \eqref{eq:Neumann} and \eqref{deltaDecomp}, 
one can reduce the expression in \eqref{etaExpression} to
\begin{equation} \label{etaExpressionReduced}
\eta(m) = \hm \, e_{\Gamma}^{T}  D_{\Gamma \Gamma}^{(L)} e_{\Gamma}.
\end{equation}
In fact, \eqref{etaExpressionReduced} can be expressed explicitly by:
\begin{equation} \label{etaExpressionExplicit}
\eta(m) = \hm~\frac{n_{H,nc} + n_{H,c}}{n_H},
\end{equation}
where 
\begin{eqnarray*}
n_{H_,nc} & := & \#\{\textrm{non-corner interface cell-centers}\}\\
n_{H_,c}  & := & \#\{\textrm{corner interface cell-centers}\}.
\end{eqnarray*}
Finally, \eqref{etaExpressionExplicit} delivers the asymptotic
convergence of expression for $\eta(m)$:
\begin{equation} \label{etaExpressionAsymp}
\eta(m) = \eta + \cO(m^{-1}),
\end{equation}

\subsection{The main results on the preconditioner}

\begin{lemma} \label{lemma:Main} 
The asymptotic behaviour of the submatrices in \eqref{eq:exact} 
is described by the following:
\begin{itemize}
  \item[(i)] $K_{HH}(m)^{-1} = e_{H}\eta^{-1} e_{H}^{T} +
    \mathcal{O}(m^{-1})$,
  \item[(ii)] $S(m) = K_{LL}^{\infty}-
    (K_{L\Gamma}^{\infty} e_{\Gamma})\eta^{-1}(e_{\Gamma}^{T}K_{\Gamma L}^{\infty}) +
    \mathcal{O}(m^{-1})$,
  \item[(iii)] $K_{LH}(m) K_{HH}(m)^{-1} = (K_{L \Gamma}^{\infty}e_{\Gamma})
    \eta^{-1} e_{H}^{T} + \mathcal{O}(m^{-1})$.
\end{itemize}
\end{lemma}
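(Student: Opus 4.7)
The plan is to prove the three claims in order, with statement (i) carrying essentially all of the analytical work and statements (ii) and (iii) following from (i) by direct matrix manipulation together with the asymptotic relations \eqref{mDependenceElim}. The central tool for (i) is the decomposition \eqref{eq:Neumann}, namely $K_{HH}(m) = m\,\mathcal{N}_{HH} + \Delta(m)$, combined with the fact that $\mathcal{N}_{HH}$ is SPSD with a simple zero eigenvalue whose unit eigenvector is $e_H$. This means that as $m \to \infty$, the operator $m\,\mathcal{N}_{HH}$ blows up on the subspace $\text{span}(e_H)^\perp$ while leaving the $e_H$-direction untouched; on that direction only the bounded perturbation $\Delta(m)$ acts, and its restriction $e_H^T \Delta(m) e_H$ equals $\eta(m)$ by \eqref{etaExpression}.

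To make this precise I would introduce an orthogonal matrix $[e_H \ U_\perp]$ that diagonalizes $\mathcal{N}_{HH}$ as $\text{diag}(0,\Lambda_\perp)$ with $\Lambda_\perp$ positive definite, and conjugate $K_{HH}(m)$ into the $2\times 2$ block form
\begin{equation*}
\begin{bmatrix} \eta(m) & e_H^T \Delta(m) U_\perp \\ U_\perp^T \Delta(m) e_H & m\Lambda_\perp + U_\perp^T \Delta(m) U_\perp \end{bmatrix}.
\end{equation*}
A standard block inversion then gives the $(1,1)$ block of the inverse as the reciprocal of the Schur complement $\eta(m) - e_H^T \Delta(m) U_\perp (m\Lambda_\perp + U_\perp^T \Delta(m) U_\perp)^{-1} U_\perp^T \Delta(m) e_H = \eta(m) + \mathcal{O}(m^{-1})$, hence equal to $\eta^{-1} + \mathcal{O}(m^{-1})$ by \eqref{etaExpressionAsymp}. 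The $(2,2)$ block of the inverse is $(m\Lambda_\perp)^{-1}$ plus $\mathcal{O}(m^{-2})$ terms, and the off-diagonal blocks of the inverse are $\mathcal{O}(m^{-1})$ by a Neumann-series argument. Conjugating back, the only $\mathcal{O}(1)$ contribution is the $(1,1)$ block, which yields $e_H \eta^{-1} e_H^T$, and all other contributions are $\mathcal{O}(m^{-1})$, establishing (i).

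For (iii), I would simply multiply: using the block structure \eqref{K_HL_blockwise} we have $K_{LH}(m)\,e_H = K_{L\Gamma}(m)\,e_\Gamma = K_{L\Gamma}^\infty e_\Gamma + \mathcal{O}(m^{-1})$ by \eqref{mDependenceElim}. Combining this with (i) and the bound $\|K_{LH}(m)\| = \mathcal{O}(1)$ gives
\begin{equation*}
K_{LH}(m)\,K_{HH}(m)^{-1} = (K_{L\Gamma}^\infty e_\Gamma)\,\eta^{-1} e_H^T + \mathcal{O}(m^{-1}),
\end{equation*}
which is (iii). Finally for (ii), I would substitute (iii) into the Schur complement \eqref{SchurComplement1}, use $K_{HL}(m) = K_{HL}^\infty + \mathcal{O}(m^{-1})$ together with $e_H^T K_{HL}^\infty = e_\Gamma^T K_{\Gamma L}^\infty$, and the last asymptotic expression from \eqref{mDependenceElim} for $K_{LL}(m)$, collecting everything of order $\mathcal{O}(m^{-1})$ into the remainder.

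The main obstacle is the rank-drop handled in (i): one must carefully track that $\Delta(m)$ itself carries $\mathcal{O}(m^{-1})$ corrections (from \eqref{mDependenceDelta}) and that $\eta(m)$ likewise has the asymptotic expansion \eqref{etaExpressionAsymp}, so that the leading $\eta^{-1} e_H e_H^T$ term is genuinely $m$-independent. Once the Schur-complement bookkeeping for the block $(1,1)$ entry of $K_{HH}(m)^{-1}$ is done cleanly, the rest of the lemma is mechanical.
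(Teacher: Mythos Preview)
Your proposal is correct and follows essentially the same route as the paper: an orthogonal change of basis diagonalizing $\mathcal{N}_{HH}$, a $2\times 2$ block representation of $K_{HH}(m)$ in that basis, and a block inversion in which the Schur complement of the large $m\Lambda_\perp$-block reduces to $\eta(m)+\mathcal{O}(m^{-1})$, after which (ii) and (iii) follow by direct substitution using \eqref{SchurComplement1}, \eqref{K_HL_blockwise}, and \eqref{mDependenceElim}. The only cosmetic difference is that the paper places $e_H$ as the last column of the orthogonal matrix rather than the first, so $\eta(m)$ sits in the $(2,2)$ slot instead of the $(1,1)$ slot.
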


\begin{proof}
  Since $\cN_{HH}$ is symmetric positive semidefinite we have the
  eigenvalue decomposition:
\begin{equation} \label{spectralDecomp1}
Z^T \cN_{HH} Z =
\text{diag}(\lambda_1, \ldots, \lambda_{n_H-1}, 0),
\end{equation}
where $\{ \lambda_i :\ i = 1, \ldots, n_H \}$ is a non-increasing
sequence of eigenvalues of $\cN_{HH}$ and $Z$ is orthogonal.  Since,
the eigenvector corresponding to the zero eigenvalue is constant, we
can write $Z = \left[\tilde{Z} \ | \ e_H \right]$ and so, using
\eqref{eq:Neumann}, we have

\begin{eqnarray}
\label{zaz}
& & Z^{T}K_{HH}(m)Z \nonumber \\ 
& & = \left[
\begin{matrix}
  m~\textrm{diag} (\lambda_{1}, \ldots, \lambda_{n_{H-1}}) +
  \tilde{Z}^{T}\Delta(m) \tilde{Z} & ~\tilde{Z}^{T}\Delta(m) e_{H}
  \\ e_{H}^{T} \Delta(m) \tilde{Z} & e_{H}^{T} \Delta(m) e_{H} \\
\end{matrix} 
\right] \nonumber \\ 
& & =: \left[
\begin{matrix}
\Lamalp & \delt(m) \\ \deltt(m) & \eta(m) \label{spectralDecomp1b} \\
\end{matrix}
\right].
\end{eqnarray}
To find the limiting form of $K_{HH}(m)^{-1}$ note that
\begin{eqnarray*}
\tilde{\Lambda}(m) & = & 
m~\textrm{diag}(\lambda_{1}, \ldots, \lambda_{n_{H-1}}) + 
\tilde{Z}^{T} \Delta(m) \tilde{Z} \\
&=&
m~\textrm{diag}(\lambda_{1}, \ldots, \lambda_{n_{H-1}})\\
&& \times \left( \tilde{I} +
m^{-1}~\textrm{diag}(\lambda_{1}^{-1}, \ldots,
\lambda_{n_{H-1}}^{-1})\tilde{Z}^{T} \Delta (m) \tilde{Z} \right).
\end{eqnarray*}

Now, let $C_\lambda :=\max_{i<n_H} \lambda_i^{-1}$ and
$C_{\Delta}$ be the constant in \eqref{mDependenceDelta}, i.e.,
$ \|\Delta(m)\|_2 \leq \|\Delta^\infty\|_2 +C_\Delta m^{-1}.$  Then,
for sufficiently large $m$,
\begin{eqnarray}
  \|\tilde{\Lambda}(m)^{-1}\|_2 \nonumber & \leq &
  \frac{m^{-1} \, C_\lambda}
  {1 - m^{-1} \, C_\lambda \, \|\tilde{Z}^T \Delta(m) \tilde{Z}\|_2} \nonumber \\
  & \leq & 
  \frac{m^{-1} \, C_\lambda}
  {C_{\tilde{\Lambda}}} \label{mainPerturbResult}
\end{eqnarray}
where 
\begin{eqnarray}
  C_{\tilde{\Lambda}} & := &  
  1 - m^{-1} \, C_\lambda \, \|\tilde{Z}^T\|_2 \|\Delta^\infty\|_2 
\|\tilde{Z}\|_2  
  \nonumber \\
  && - m^{-2} \, C_{\Delta} \, C_\lambda \, \|\tilde{Z}^T\|_2 \|\tilde{Z}\|_2
  \nonumber \\
  & = & 1 + \cO(m^{-1}) \label{auxmainPerturbResult}.
\end{eqnarray}
Hence, combining \eqref{mainPerturbResult} and \eqref{auxmainPerturbResult},
we obtain:
\begin{equation} \label{result1_lemma}
   \tilde{\Lambda}(m)^{-1} =  \cO(m^{-1}).
\end{equation}
We proceed with the following inversion:
\begin{equation*} \label{inversion1} \left[ \begin{array}{cc}
      \tilde{\Lambda}(m) & \delt(m) \\
      \delt(m)^T & \eta(m)
\end{array} \right]^{-1} = U(m)~V(m)~U(m)^T,
\end{equation*}
where
\begin{eqnarray*}
  U(m) & := &
  \left[ \begin{array}{cc}
      \tilde{I} & -\tilde{\Lambda}(m)^{-1} \delt(m) \\
      0^T & 1
    \end{array} \right],\\
  V(m) & := &
  \left[ \begin{array}{cc}
      \tilde{\Lambda}(m)^{-1} & 0 \\ 
      0^T & \left(\eta(m) - \delt(m)^T \tilde{\Lambda}(m)^{-1} \delt(m)
      \right)^{-1}
\end{array} \right].
\end{eqnarray*}

Using \eqref{mDependenceDelta}, one gets
\begin{equation} \label{result2_lemma} \delt(m) = \tilde{Z}^T \Delta^\infty
  e_H + \cO(m^{-1}).
\end{equation}
Then, \eqref{etaExpressionAsymp}, \eqref{result1_lemma}, 
and \eqref{result2_lemma} imply that
\begin{eqnarray*} \label{result3_lemma}
  \eta(m)^{-1} & = & \eta^{-1} + \cO(m^{-1}), \label{result4_lemma}\\
  U(m) & = & I + \cO(m^{-1}), \\
  V(m) & = & \left[ \begin{array}{cc} O & 0 \\ 0^T & \eta^{-1}
\end{array} \right]  + \cO(m^{-1}).
\end{eqnarray*}
Combining the above results, we arrive at
\begin{equation}
  \label{inversion_limit}
  \left[ \begin{array}{cc}
      \tilde{\Lambda}(m) & \delt(m) \\
      \delt(m)^T & \eta(m)
    \end{array} \right]^{-1}
  \ = \
  \left[ \begin{array}{cc}
      O & 0 \\ 0^T & \eta^{-1}
    \end{array} \right]
  \ + \ \calg{O}(m^{-1})\ ,
\end{equation}
and, by \eqref{spectralDecomp1b}, we have
\begin{eqnarray*}
  \label{eq:largest}
  K_{HH}(m)^{-1} \ & = & 
  \ Z \left[ \begin{array}{cc}
      O & 0 \\ 0^T & \eta^{-1}
    \end{array} \right] Z^T \ + \ \calg{O}(m^{-1}) \ \\
  & = & \ e_H \left(
    e_{\Gamma}^T ~2\, D^{(L)}_{\Gamma \Gamma}
    e_{\Gamma} \right)^{-1} e_H^T  \ + \
  \calg{O}(m^{-1}) \ ,
\end{eqnarray*}
which proves part (i) of the Lemma.

Parts (ii) and (iii) follow from simple substitution, using
\eqref{SchurComplement1} and \eqref{K_HL_blockwise}.  \qed
\end{proof}

We use the following limiting forms in the definition of the proposed
preconditioner \eqref{MainPrec1}: 
\begin{eqnarray}
K_{HH}^{\infty^\dagger} & := & e_H \eta^{-1} e_H^T,  \label{limitingForm1}\\
Q_{LH}^\infty & := & K_{LH}^\infty K_{HH}^{\infty^{\dagger}}, \nonumber \\
S^\infty & := & 
K_{LL}^\infty - K_{LH}^\infty K_{HH}^{\infty^{\dagger}}  K_{HL}^\infty
\label{limitingForm2}.
\end{eqnarray}

Based on the above perturbation analysis we propose the following
preconditioner:
\begin{eqnarray}
  B_{AGKS}(m) \ &:=& \
  { \left[ \begin{array}{cc}  I_{HH} & - Q_{LH}^{\infty^T} \\
        0 & I_{LL}\end{array}\right]}
  { \left[ \begin{array}{cc}    K_{HH}(m)^{-1}& 0 \\
        0&  S^{\infty^{-1}} \end{array}\right]} \nonumber \\ 
  && \times { \left[ \begin{array}{cc}  I_{HH} & 0 \\    - Q_{LH}^{\infty} &
        I_{LL} \end{array}\right]}. \label{MainPrec1}
\end{eqnarray}

The following theorem shows that $B_{AGKS}$ is an effective preconditioner
for $m \gg 1$.
\begin{theorem}
\label{th:prec_robust}
For $m$ sufficiently large we have
\[
\sigma(B_{AGKS}(m)~K(m)) \ \subset \ [1-cm^{-1/2},1+cm^{-1/2}]
\]
for some constant $c$ independent of $m$, and therefore
\[
\kappa(B_{AGKS}(m)~K(m)) \ = \ 1 \ + \ \cO(m^{-1/2}).
\]
\end{theorem}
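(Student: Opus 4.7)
The plan is to exploit the fact that both $B_{AGKS}(m)$ and $K(m)$ are SPD, so $B_{AGKS}(m)K(m)$ is similar to the symmetric matrix $K(m)^{1/2}B_{AGKS}(m)K(m)^{1/2}$, and its eigenvalues coincide with the generalized eigenvalues of the pencil $(K(m),B_{AGKS}(m)^{-1})$. The theorem will therefore follow once we establish the two-sided quadratic-form estimate
\begin{equation*}
\bigl|x^{T}K(m)x - x^{T}B_{AGKS}(m)^{-1}x\bigr| \;\leq\; c\,m^{-1/2}\,x^{T}B_{AGKS}(m)^{-1}x
\end{equation*}
for all $x$ and all sufficiently large $m$, from which $\sigma(B_{AGKS}(m)K(m))\subset[1-c'm^{-1/2},1+c'm^{-1/2}]$ and $\kappa = (1+c'm^{-1/2})/(1-c'm^{-1/2}) = 1+\mathcal{O}(m^{-1/2})$ are immediate.

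To establish the two-sided estimate, I would invert the outer unit-triangular factors of \eqref{MainPrec1} to express $B_{AGKS}(m)^{-1}$ as a block-Gauss factorization with middle diagonal blocks $K_{HH}(m)$ and $S^{\infty}$ and off-diagonal factor $Q_{LH}^{\infty}$, and read off the analogous factorization of $K(m)$ from \eqref{eq:exact}. The H-middle block is shared; the discrepancies live in $S(m)-S^{\infty}$ and in $L(m):=K_{LH}(m)K_{HH}(m)^{-1}-Q_{LH}^{\infty}$, both of which are $\mathcal{O}(m^{-1})$ by Lemma \ref{lemma:Main}(ii),(iii). Introducing the changes of variable $\tilde{x}_{H}:=x_{H}+K_{HH}(m)^{-1}K_{HL}(m)x_{L}$ and $\hat{x}_{H}:=x_{H}+Q_{LH}^{\infty T}x_{L}$ diagonalizes each quadratic form separately, and since $\tilde{x}_{H}-\hat{x}_{H}=L(m)^{T}x_{L}$, a direct subtraction yields
\begin{equation*}
x^{T}(K(m)-B_{AGKS}(m)^{-1})x = 2\hat{x}_{H}^{T}K_{HH}(m)L(m)^{T}x_{L} + x_{L}^{T}L(m)K_{HH}(m)L(m)^{T}x_{L} + x_{L}^{T}(S(m)-S^{\infty})x_{L}.
\end{equation*}
The LL term is immediately $\mathcal{O}(m^{-1})\|x_{L}\|^{2}$, and the pure HH term is controlled by $\|L(m)K_{HH}(m)L(m)^{T}\|\leq\|L(m)\|^{2}\|K_{HH}(m)\|=\mathcal{O}(m^{-2})\cdot\mathcal{O}(m)=\mathcal{O}(m^{-1})$. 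For the cross term I would apply the Cauchy--Schwarz inequality in the $K_{HH}(m)$-inner product, reuse the same $\mathcal{O}(m^{-1})$ bound on the weight $L(m)K_{HH}(m)L(m)^{T}$, and finish with an AM--GM step plus the asymptotic coercivity of $S^{\infty}$ to absorb $\|\hat{x}_{H}\|_{K_{HH}(m)}^{2}+\|x_{L}\|^{2}$ into $x^{T}B_{AGKS}(m)^{-1}x$.

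The main obstacle is precisely the cross term: although every individual perturbation supplied by Lemma \ref{lemma:Main} is $\mathcal{O}(m^{-1})$, the square root of the $\mathcal{O}(m^{-1})$ weight $L(m)K_{HH}(m)L(m)^{T}$ that is unavoidable in the Cauchy--Schwarz argument costs a factor of $m^{1/2}$, which is exactly why the theorem's rate is $\mathcal{O}(m^{-1/2})$ rather than $\mathcal{O}(m^{-1})$. This loss reflects the structural fact that $K_{HH}(m)$ has operator norm $\mathcal{O}(m)$ in the directions orthogonal to $e_{H}$ by \eqref{eq:Neumann}, whereas the L-block factors against which it must be balanced remain $\mathcal{O}(1)$; any sharper bound would require exploiting the anisotropy of $K_{HH}(m)$ between $\mathrm{span}(e_{H})$ and its complement, going beyond the generic operator-norm argument sketched here.
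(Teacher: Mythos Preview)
Your argument is correct. The paper itself does not give a self-contained proof of Theorem~\ref{th:prec_robust}: it simply observes that the asymptotic relations \eqref{mDependenceElim}, \eqref{mDependenceDelta} and Lemma~\ref{lemma:Main} place the FV submatrices in exactly the same regime as in the FE case, and then invokes the proof in \cite{AGKS:2007}. Your proposal supplies precisely the kind of argument that lies behind that citation: compare $K(m)$ and $B_{AGKS}(m)^{-1}$ through their block $LDL^{T}$ factorizations sharing the $K_{HH}(m)$ pivot, and bound the discrepancy in quadratic forms using Lemma~\ref{lemma:Main}(ii),(iii). Your identification of the cross term $2\hat{x}_{H}^{T}K_{HH}(m)L(m)^{T}x_{L}$ as the sole source of the $m^{-1/2}$ (rather than $m^{-1}$) rate, via Cauchy--Schwarz in the $K_{HH}(m)$-inner product against the $\mathcal{O}(m^{-1})$ weight $L(m)K_{HH}(m)L(m)^{T}$, is exactly right and matches the mechanism in the FE analysis. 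The only point worth making explicit in a write-up is that $S^{\infty}$ is a \emph{fixed} SPD matrix (independent of $m$), so its smallest eigenvalue furnishes the $m$-independent coercivity constant you need to absorb $\|x_{L}\|^{2}$ into $x^{T}B_{AGKS}(m)^{-1}x$; this is implicit in your ``asymptotic coercivity of $S^{\infty}$'' but deserves a sentence.
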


\begin{proof}
  The $m$-dependence of $K_{HL}(m), K_{LH}(m)$, and \\ $K_{LL}(m)$ are
  eliminated asymptotically as in \eqref{mDependenceElim}. Furthermore,
  $m$-dependence of $\eta(m)$ is also eliminated as a result of
  \eqref{mDependenceDelta}.  Therefore, the result follows by slightly
  modifying the proof, i.e., by incorporating the $m$- dependencies of
  the above entities, given in Aksoylu et al.~\cite{AGKS:2007} for the
  FE case. \qed
\end{proof}

\section{Qualitative nature of the solution of the high-contrast
  diffusion equation and decoupling}
\label{sec:qualitativeNature}
\begin{figure}[tbp]
  \centering{
  \includegraphics[width=1.4in]{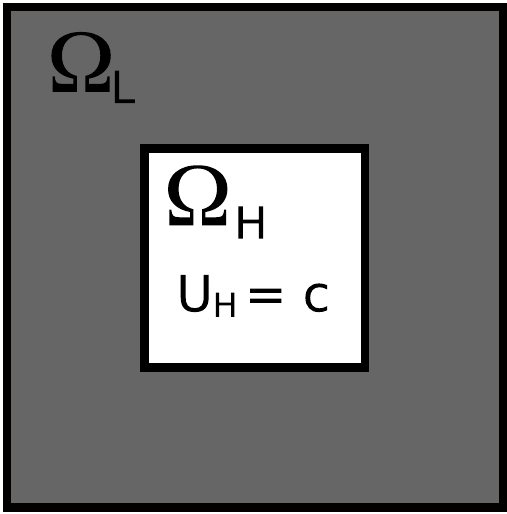}\hspace{.5cm}
  \includegraphics[width=1.6in]{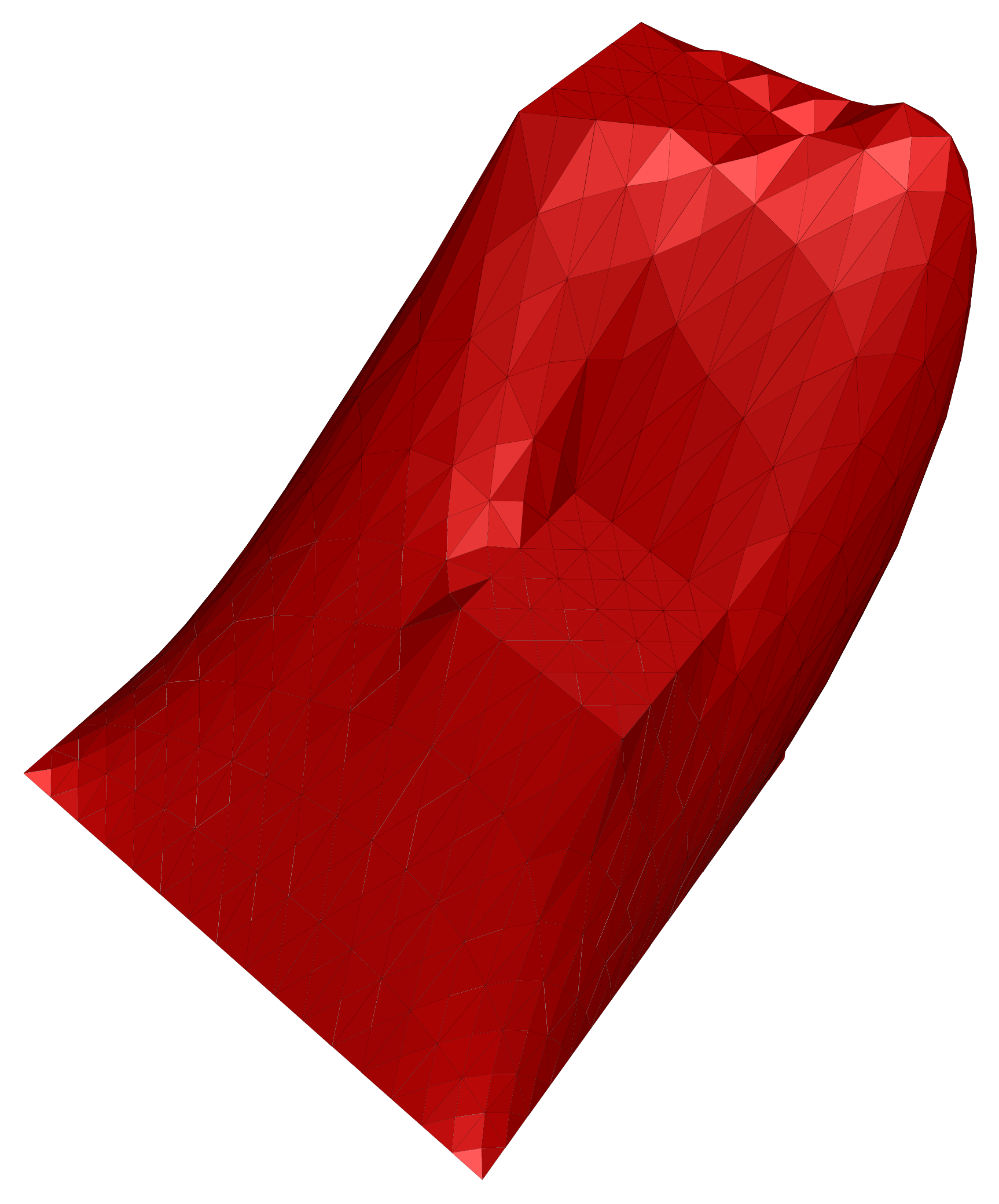}
}
\caption{(Left) The matrix in (\ref{ASlimitPlus1})
    corresponds to a homogeneous Dirichlet for the Laplacian on
    $\Omega$ under the constraint that the solution is constant on
    $\Omega_H$.  (Right) $x_{H_i}(m_i) = c_{H_i} +
    \mathcal{O}(m_i^{-1}),~i=1,2$ where $\Omega_{H_1}$ and
    $\Omega_{H_2}$ correspond to square and triangle shaped highly-diffusive
    islands, respectively with $m_i=10^6$.
  \label{fig:hiPermIslands}}
\end{figure}

We advocate the usage of SPA because it is a very effective tool in
gaining qualitative insight about the asymptotic behavior of the
solution of the underlying PDE.  Through SPA, in
Lemma~\ref{lemma:Main}, we were able to fully reveal the asymptotic
behaviour of the submatrices of $K$ in \eqref{eq:exact}. This
information leads to a characterization of the limit of the underlying
discretized inverse operator and we studied this in more detail
in~\cite{AkBe2009}.  We now prove that \emph{asymptotically, the
  solution over the highly-diffusive island goes to a constant
  vector}. This is probably the most fundamental qualitative feature
of the solution of the high-contrast diffusion equation.

\begin{lemma}
Let $e_H$ be the constant vector. Then, 
\begin{equation} \label{x_HConst}
x_H(m) = c_H~e_H  \ + \ \mathcal{O}(m^{-1}),
\end{equation}
where $c_H$ is determined by the solution in the lowly-diffusive region.
\end{lemma}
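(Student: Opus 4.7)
The plan is to use the block-row equation for $x_H(m)$ from \eqref{2x2blockSys} together with part (i) of Lemma~\ref{lemma:Main}. From the first block row of \eqref{2x2blockSys} we have
\[
x_H(m) \;=\; K_{HH}(m)^{-1}\bigl( b_H - K_{HL}(m) x_L(m) \bigr),
\]
so the task reduces to substituting the asymptotic expansion
$K_{HH}(m)^{-1} = e_H\eta^{-1}e_H^T + \mathcal{O}(m^{-1})$ and showing that the remaining factor is a bounded scalar multiplied by $e_H$, plus a lower-order remainder.

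Concretely, I would first argue that $x_L(m)$ stays bounded as $m\to\infty$. By the Schur complement formula, $x_L(m) = S(m)^{-1}\bigl(b_L - K_{LH}(m)K_{HH}(m)^{-1}b_H\bigr)$, and by parts (ii) and (iii) of Lemma~\ref{lemma:Main} together with \eqref{mDependenceElim} and the invertibility of $S^\infty$ (needed to guarantee $S(m)^{-1} = (S^\infty)^{-1} + \mathcal{O}(m^{-1})$), we obtain $x_L(m) = x_L^\infty + \mathcal{O}(m^{-1})$ with $x_L^\infty = (S^\infty)^{-1}(b_L - Q_{LH}^\infty b_H)$. In particular $\|x_L(m)\|$ is uniformly bounded, and $K_{HL}(m) x_L(m) = K_{HL}^\infty x_L^\infty + \mathcal{O}(m^{-1})$ using \eqref{mDependenceElim}.

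Plugging these into the block-row equation gives
\[
x_H(m) \;=\; \bigl(e_H\eta^{-1}e_H^T + \mathcal{O}(m^{-1})\bigr)\bigl(b_H - K_{HL}^\infty x_L^\infty + \mathcal{O}(m^{-1})\bigr),
\]
and expanding we obtain
\[
x_H(m) \;=\; c_H\, e_H \;+\; \mathcal{O}(m^{-1}),
\]
with the scalar
\[
c_H \;=\; \eta^{-1} e_H^T \bigl(b_H - K_{HL}^\infty x_L^\infty\bigr),
\]
which depends on $x_L^\infty$, the solution in the lowly-diffusive region, exactly as claimed. The bounded prefactor absorbs into the $\mathcal{O}(m^{-1})$ term because $b_H$ is $m$-independent and $x_L^\infty$ is bounded.

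The main obstacle, as in the preconditioner analysis, is the uniform-in-$m$ control of $x_L(m)$: this is where one needs to know not merely that $S(m) \to S^\infty$ but that $S^\infty$ is invertible on the relevant subspace, so that $\|S(m)^{-1}\|$ stays bounded. Once this is in hand, the rest is a direct multiplication of asymptotic expansions and the key structural observation is that Lemma~\ref{lemma:Main}(i) forces the leading term of $K_{HH}(m)^{-1}$ to be rank-one with range spanned by the constant vector $e_H$, which is precisely what makes $x_H(m)$ asymptotically constant.
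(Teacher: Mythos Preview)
Your proof is correct and follows essentially the same route as the paper: first establish $x_L(m) = x_L^\infty + \mathcal{O}(m^{-1})$ via the Schur-complement formula together with Lemma~\ref{lemma:Main}, then substitute into the first block row and use $K_{HH}(m)^{-1} = e_H\eta^{-1}e_H^T + \mathcal{O}(m^{-1})$ to extract the rank-one leading term. Your explicit identification $c_H = \eta^{-1} e_H^T\bigl(b_H - K_{HL}^\infty x_L^\infty\bigr)$ and your remark that one needs $S^\infty$ invertible to control $\|S(m)^{-1}\|$ are details the paper leaves implicit but uses in the same way.
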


\begin{proof}
We prove the result by providing an explicit quantification of
the limiting process based on Lemma~\ref{lemma:Main}:
\begin{equation*}
\begin{array}{lllll}
x_L(m) & = & S^{-1}(m)~
  \{b_L - K_{LH}(m) K_{HH}^{-1}(m) b_H \} &&\\ 
  & = & S^{\infty^{-1}} \{b_L - 
  K_{LH}^\infty K_{HH}^{\infty^{\dagger}} b_H \} + \mathcal{O}(m^{-1})\\
  & =: & x_L^\infty + \mathcal{O}(m^{-1}),\\
  x_H(m) & = & 
  K_{HH}^{-1}(m)~ \{b_H - K_{HL}(m) x_L(m) \} &&\\ 
  & = & K_{HH}^{\infty^\dagger} 
  \{ b_H - K_{HL}^\infty x_L^\infty \} +
  \mathcal{O}(m^{-1})\\
  & =: & c_H~e_H  \ + \ \mathcal{O}(m^{-1}).
\end{array}
\end{equation*}
\qed
\end{proof}

SPA helps to reveal a further qualitative property, namely, the
decoupling phenomenon. We show how the original algebraic solution
strategy decouples into two algebraic problems of different nature and
the decoupling is indeed an implication of \eqref{x_HConst}.  This
observation can be a promising research avenue when it is carried over
to different classes of PDEs and discretizations. We will pursue this
in our future research.

In order to show the decoupling, let us start by noting that
$S^\infty$ in Lemma~\ref{lemma:Main} can also be interpreted as the
Schur complement of 
$c^2~e_{\Gamma}^T K^{(L)}_{\Gamma \Gamma} e_{\Gamma}$ in the matrix
\begin{equation*}
KK_{LL}^{\infty} \:= \ \left[ \begin{array}{cc}
c^2~e_{\Gamma}^T K^{(L)}_{\Gamma \Gamma} e_{\Gamma}  &
c~e_{\Gamma}^T K_{\Gamma L} \\
c~K_{L \Gamma} e_{\Gamma} & K_{LL}
\end{array} \right] \ ,
\end{equation*}
for any nonzero value of $c$. In particular, if we choose $c :=
n_H^{1/2}$, then $c e_{\Gamma} = 1_{\Gamma}$, the
vector of all ones on $\Gamma$ and, using also \eqref{eq:Neumann}, we
have
\begin{eqnarray*} 
KK_{LL}^{\infty} & := & \left[ \begin{array}{cc}
1_{\Gamma}^T K^{(L)}_{\Gamma \Gamma} 1_{\Gamma}  &
1_{\Gamma}^T K_{\Gamma L} \\
K_{L \Gamma} 1_{\Gamma} & K_{LL}
\end{array} \right] \nonumber \\ 
& = & 
\left[ \begin{array}{cc}
1_{H}^T K_{HH}(1) 1_{H}  &
1_{H}^T K_{HL} \\
K_{LH} 1_{H} & K_{LL}
\end{array} \right] \label{ASlimitPlus1}.
\end{eqnarray*}
This is the stiffness matrix for a pure Dirichlet problem for the
Laplacian on all of $\Omega$ with the additional constraint that the
solution is constant on $\Omega_H$. See Figure~\ref{fig:hiPermIslands}.
Thus, when $m \gg 1$, the original problem \emph{decouples almost
entirely into a (regularized) Neumann problem}
(i.e. $K_{HH}(m)$) for the Laplacian on $\Omega_H$ (scaled
by $m$) and a Dirichlet problem (i.e. $KK_{LL}^\infty$) for
the Laplacian on all of $\Omega$, but under the additional constraint
that the solution is constant on $\Omega_H$. 

Next, we show an additional decoupling. This time the preconditioner
decouples into a block diagonal matrix with the help of a deflation
method.

\section{Implementation aspects and the related deflation method}
\label{sec:subdomainDefl}

The fact that $\cN_{HH}$ has a simple zero eigenvalue (with the
corresponding constant eigenvector $e_H$) and $\cN_{HH}$ is of co-rank
$1$ imply that $K_{HH}(m)$ has a single eigenvalue of $\cO(1)$ and
$n_H - 1$ eigenvalues of $\cO(m)$.  For sufficiently large $m$, $e_H$
can be taken as an approximate eigenvector corresponding to
$\cN_{HH}$'s single smallest eigenvalue. Therefore, in the light of 
\eqref{eq:exact}, 
\begin{equation*}
e^T := [e_H^T,0^T],
\end{equation*}
becomes an approximate eigenvector corresponding to the smallest
eigenvalue of the decoupled matrix:
\begin{equation}
\left[ \begin{matrix} \label{decoupledK}
K_{HH}(m) & 0 \\ 0 & S(m)
\end{matrix} \right].
\end{equation}

In order to eliminate the negative effect of the smallest eigenvalue,
we utilize a deflation method under the constraint that it gives the
decoupling as in \eqref{decoupledK}. If such decoupling occurs, it
would provide a large computational advantage. We utilize a deflation
method, known as 
\emph{subdomain deflation}~\cite{Nicolaides,VuSeMe:1999,Vuik1999,Vuik01}
constructed by the following
$K$-orthogonal projection onto the subspace orthogonal to $e$, provides
the desired decoupling.
\begin{equation} \label{deflationProj}
{\cP^T} \ := \ I -  e \, \eta(m)^{-1} \, e^T \, K \,.
\end{equation}

We apply $B_{AGKS}$ within a conjugate gradient algorithm for
the deflated system
\begin{equation} \label{deflated}
\cP K x^\perp \ = \ \cP b,
\end{equation}
where $x^\perp := \cP^T x$ is the projected solution.
The component of $x$ in the direction of $e$ is then simply given by
\begin{equation*}
x - x^\perp \ = \ (I - \cP^T) x = \ e \, \eta(m)^{-1} \, e^T \, b \ .
\end{equation*}

When we rewrite \eqref{MainPrec1} as 
\begin{equation*}
B_{AGKS}(m) = L^T  \, D(m) \, L,
\end{equation*}
where 
\begin{eqnarray*}
L & := & 
\left[ 
\begin{array}{cc}  I_{HH} & 0 \\ - Q_{LH}^{\infty} & I_{LL} \end{array}
\right], \\
D(m) & := & 
\left[ 
\begin{array}{cc} K_{HH}(m)^{-1}& 0 \\ 0&  S^{\infty^{-1}} \end{array}
\right].
\end{eqnarray*}
By observing the following interesting property
\begin{equation} \label{interestingPropLP}
L \, \cP = \cP,
\end{equation}
the system in \eqref{deflated} after preconditioned by $B_{AGKS}(m)$
turns into:
\begin{equation*}
B_{AGKS}(m) \, \left(\cP K x^\perp \right) = B_{AGKS}(m) \, \left(\cP b \right).
\end{equation*}
Then, it reduces to the following block diagonal system:
\begin{equation*}
D(m)~\cP\,K\,x^\perp = D(m)~\cP \,b.
\end{equation*}

\begin{remark}
The fact that $e$ becomes an approximate eigenvector corresponding to
the smallest eigenvalue of the decoupled matrix in \eqref{decoupledK}
is not necessarily true for the original matrix $K(m)$ because it is
not block diagonal.  Therefore, utilizing the above deflation method
in the direction of $e$ will not necessarily provide any further
robustness for the underlying preconditioner if that preconditioner
uses off-diagonal blocks of $K(m)$ as well.  Multigrid method is one
such preconditioner.  In fact, numerically we observed that
introducing deflation did not improve the multigrid convergence rate
at all. If one still wants to improve the convergence rate by the help
of deflation, then the approximate eigenvector corresponding to the
smallest eigenvalue must be computed in an alternative way rather than
the simple usage of $e$.  Consequently, we can say that $B_{AGKS}(m)$,
by its design, naturally goes with subdomain deflation.  The
incorporation of subdomain deflation not only brings robustness with
respect to the smallest eigenvalue, but also provides huge
computational savings due to reduction to a block diagonal
system. This is a significant computational advantage $B_{AGKS}(m)$ offers.
\end{remark}

By exploiting the fact that $S^\infty$ in \eqref{limitingForm2} is
only a low-rank perturbation of $K^\infty$, we can build robust
preconditioners for $S^\infty$ in \eqref{MainPrec1} via standard
multigrid preconditioners.  Combining \eqref{limitingForm1} and
\eqref{limitingForm2}, we arrive at
\begin{equation*}
S^\infty = K_{LL}^\infty - v \eta^{-1}v^T,
\end{equation*}
where $v := K_{LH}^\infty e_H$. If $M_{LL}$ denotes a
standard multigrid V-cycle for $K_{LL}$, we can construct an
efficient and robust preconditioner $\tilde{S}^{-1}$ for $S^\infty$
using the Sherman-Morrison-Woodbury formula, i.e.
\begin{equation} \label{Sherman-Morrison}
\tilde{S}^{-1} \ := \ M_{LL} \ + \ M_{LL} v ~(1 - \eta)^{-1} \, v^T M_{LL}.
\end{equation}
Note also that we can precompute and store $M_{LL} v$ during the setup
phase.  This means we only need to apply the multigrid V-cycle
$M_{LL}$ once per iteration.  Therefore, the following practical
version of preconditioner \eqref{MainPrec1} is used in the
implementation:
\begin{eqnarray}
\tilde{B}_{AGKS} & := &
{ \left[ \begin{array}{cc}  I_{HH} & - Q_{LH}^{\infty^T} \\
0 & I_{LL}\end{array}\right]}
{ \left[ \begin{array}{cc}    M_{HH} & 0 \\
0&  \tilde{S}^{-1} \end{array}\right]} \nonumber \\
&& \times 
{ \left[ \begin{array}{cc}  I_{HH} & 0 \\    - Q_{LH}^\infty & I_{LL}
\end{array}\right]} \label{practicalAGKS}.
\end{eqnarray}

\begin{table*}
\caption{Preconditioner = CCMG, Prolongation = Wesseling-Khalil, Smoother = ILU}
\label{CWI}      
$$\begin{array}{rrrrrrrrrrr}
  \hline \\
  h \backslash m & 10^0 & 10^1 & 10^2 & 10^3 & 10^4 & 10^5 & 10^6 & 10^7 &
  10^8 & 10^9 \\[1ex]
  \hline\hline \\
  1/8 & \mathbf{10}, 0.025 & \mathbf{29}, 0.472 & \mathbf{30}, 0.500 & 
  \mathbf{35}, 0.528 & \mathbf{36}, 0.559 & \mathbf{40}, 0.593 & \mathbf{44}, 
  0.622 & \mathbf{52}, 0.669 & \mathbf{52}, 0.669 & \mathbf{58}, 0.697 \\[1ex]
  1/16 & \mathbf{3}, 0.000 & \mathbf{3}, 0.000 & \mathbf{3}, 0.001 &
  \mathbf{6}, 0.024 & \mathbf{15}, 0.236 & \mathbf{60}^+, 0.934 & \mathbf{60}^+, 
  0.934 & \mathbf{60}^+, 0.934 & \mathbf{60}^+, 0.934 & \mathbf{60}^+, 0.934 
  \\[1ex]
  1/32 & \mathbf{3}, 0.000 & \mathbf{3}, 0.000 & \mathbf{4}, 0.004 & 
  \mathbf{9}, 0.074 & \mathbf{36}, 0.558 & \mathbf{60}^+, 0.891 & \mathbf{60}^+,
  0.899 & \mathbf{60}^+, 0.899 &  \mathbf{60}^+, 0.899 & 
  \mathbf{60}^+, 0.899 \\[1ex]
  1/64 &~~~ \mathbf{3}, 0.000 &~~~  \mathbf{4}, 0.001 &~~~  \mathbf{5}, 0.015
  &~~~  \mathbf{9}, 0.084 &~~~  \mathbf{15}, 0.239 &~~~  \mathbf{20}, 0.315
  &~~~  \mathbf{15}, 0.246 &~~~  \mathbf{9}, 0.093 &~~~  \mathbf{8}, 
  0.067 &~~~  \mathbf{8}, 0.032 \\[1ex]
  \hline
\end{array}$$
\end{table*}

\begin{table*}
\caption{Preconditioner = CCMG, Prolongation = Wesseling-Khalil, Smoother = sGS}
\label{CWS}      
$$\begin{array}{rrrrrrrrrrr}
  \hline \\
  h \backslash m & 10^0 & 10^1 & 10^2 & 10^3 & 10^4 & 10^5 & 10^6 & 10^7 &
  10^8 & 10^9\\[1ex]
  \hline \hline \\
  1/8 &~~~ \mathbf{10}, 0.025 &~~~ \mathbf{29}, 0.472 &~~~ \mathbf{30}, 0.500 
  &~~~ \mathbf{35}, 0.528 &~~~ \mathbf{36}, 0.559 &~~~ \mathbf{40}, 0.593 &~~~
  \mathbf{44}, 0.622 &~~~ \mathbf{52}, 0.669 &~~~ \mathbf{52}, 0.669 &~~~
  \mathbf{58}, 0.697 \\[1ex]
  1/16 & \mathbf{12}, 0.166 & \mathbf{14}, 0.222 & \mathbf{19}, 0.310 &
  \mathbf{26}, 0.409 & \mathbf{38}, 0.535 & \mathbf{52}, 0.649 & \mathbf{60}^+,
  0.777 & \mathbf{60}^+, 0.843 & \mathbf{60}^+, 0.938 & \boldsymbol{\infty}, 
  1.070  \\[1ex]
  1/32 & \mathbf{14}, 0.195 & \mathbf{15}, 0.217 & \mathbf{18}, 0.294 & 
  \mathbf{25}, 0.432 & \mathbf{39}, 0.552 & \mathbf{58}, 0.698 & \mathbf{60}^+,
  0.917 & \boldsymbol{\infty}, 1.002 & \boldsymbol{\infty}, 1.080 &
  \boldsymbol{\infty}, 1.127 \\[1ex]
  1/64 & \mathbf{13}, 0.197 & \mathbf{14}, 0.221 & \mathbf{17}, 0.282 & 
  \mathbf{22}, 0.362 & \mathbf{31}, 0.497 & \mathbf{48}, 0.645 & \mathbf{60}^+, 
  0.793 & \boldsymbol{\infty}, 0.954 & \boldsymbol{\infty}, 1.097 & 
  \boldsymbol{\infty}, 1.120 \\[1ex]
  \hline
\end{array}$$
\end{table*}

\section{Numerical experiments}
\label{sec:numericalExperiments}

The goal of the numerical experiments is to compare the performance of
the two preconditioners: AGKS and CCMG. The domain is unit square with
a uniform mesh consisting of $2^\ell \times 2^\ell,~ \ell=3,\ldots, 6,$
cells.  The coarsest level mesh contains $8 \times 8$ cells with a
highly-diffusive single island of size $2\times2$ cells centered at
the point $(3/8,3/8)$.  For the discussion of multiple disconnected
islands, refer to~\cite[Sections 3 and 4]{AGKS:2007}. 

We denote the norm of the relative residual at iteration $t$ by $rr^{(t)}$:
\begin{equation*}
rr^{(t)} := \frac{\|r^{(t)}\|_2}{\|r^{(0)}\|_2},
\end{equation*}
where $r^{(t)}$ denotes the residual at iteration $t$ with a stopping
criterion of $rr^{(t)} \leq 10^{-9}.$ In the tables, we report the
iteration count and the average reduction factor of the residual which is
defined as:
\begin{equation*}
\left( rr^{(t)}\right )^{1/t}.
\end{equation*} 
We enforce an iteration bound of $60$. If the the method
seems to converge slightly beyond this bound, we denote
it by $60+$. Whereas, the divergence is denoted by $\infty.$
In Tables \ref{CWI}--\ref{ABS}, iteration count and the average
reduction factor are reported for combinations of 
preconditioner, prolongation, and smoother types. 
\begin{table*}
\caption{Preconditioner = CCMG, Prolongation = Bi-linear, Smoother = ILU}
\label{CBI}      
$$\begin{array}{rrrrrrrrrrr}
  \hline \\
  h \backslash m & 10^0 & 10^1 & 10^2 & 10^3 & 10^4 & 10^5 & 10^6 & 10^7 &
  10^8 & 10^9  \\[1ex]
  \hline\hline \\
  1/8 &~~~ \mathbf{10}, 0.025 &~~~ \mathbf{29}, 0.472 &~~~ \mathbf{30}, 0.500 
  &~~~ \mathbf{35}, 0.528 &~~~ \mathbf{36}, 0.559 &~~~ \mathbf{40}, 0.593 &~~~
  \mathbf{44}, 0.622 &~~~ \mathbf{52}, 0.669 &~~~ \mathbf{52}, 0.669 &~~~
  \mathbf{58}, 0.697 \\[1ex]
  1/16 & \mathbf{3}, 0.000 & \mathbf{3}, 0.000 & \mathbf{3}, 0.001 & \mathbf{6}, 0.003  &  \mathbf{15}, 0.216  & \boldsymbol{\infty}, 0.954  & \boldsymbol{\infty}, 0.954  &  \boldsymbol{\infty}, 0.954  &  \boldsymbol{\infty}, 0.954 &  \boldsymbol{\infty}, 0.954   
    \\[1ex]
  1/32 & \mathbf{3}, 0.000 & \mathbf{3}, 0.000 & \mathbf{4}, 0.003 & 
  \mathbf{7}, 0.045 & \mathbf{19}, 0.316 & \mathbf{60}+, 0.838
  & \mathbf{60}^+, 0.832 &  \mathbf{60}^+, 0.842 &  \mathbf{60}^+, 0.843 & 
  \mathbf{60}^+, 0.843 \\[1ex]
  1/64 & \mathbf{3}, 0.000 & \mathbf{3}, 0.001 & \mathbf{5}, 0.009 &
  \mathbf{7}, 0.028 & \mathbf{18}, 0.290 & \mathbf{16}, 0.239 & \mathbf{10},   0.097 & \mathbf{8}, 0.050 & \mathbf{8}, 0.049 &\mathbf{8}, 0.056 \\[1ex]
  \hline
\end{array}$$
\end{table*}

\begin{table*}
\caption{Preconditioner = CCMG, Prolongation = Bi-linear, Smoother = sGS}
\label{CBS}      
$$\begin{array}[h]{rrrrrrrrrrr}
  \hline \\
  h \backslash m & 10^0 & 10^1 & 10^2 & 10^3 & 10^4 & 10^5 & 10^6 & 10^7 &
  10^8 & 10^9 \\[1ex]
  \hline \hline \\
  1/8 &~~~ \mathbf{10}, 0.025 &~~~ \mathbf{29}, 0.472 &~~~ \mathbf{30}, 0.500 
  &~~~ \mathbf{35}, 0.528 &~~~ \mathbf{36}, 0.559 &~~~ \mathbf{40}, 0.593 &~~~
  \mathbf{44}, 0.622 &~~~ \mathbf{52}, 0.669 &~~~ \mathbf{52}, 0.669 &~~~
  \mathbf{58}, 0.697 \\[1ex]
  1/16 & \mathbf{11}, 0.122 & \mathbf{13}, 0.194 & \mathbf{18}, 0.293 &
  \mathbf{24}, 0.414 & \mathbf{38}, 0.564 & \mathbf{58}, 0.692 & \mathbf{60}^+,
  0.827 & \mathbf{60}^+, 0.931 & \boldsymbol{\infty}, 0.995 &
  \boldsymbol{\infty}, 1.094 \\[1ex]
  1/32 & \mathbf{8}, 0.075 & \mathbf{11}, 0.121 & \mathbf{15}, 0.229 & 
  \mathbf{22}, 0.362 & \mathbf{35}, 0.545 & \mathbf{60}^+, 0.722 &
  \mathbf{60}^+, 0.903 & \boldsymbol{\infty}, 1.022 & \boldsymbol{\infty}, 1.085
  & \boldsymbol{\infty}, 1.133  \\[1ex]
  1/64 & \mathbf{8}, 0.068 & \mathbf{10}, 0.115 & \mathbf{14}, 0.216 &
  \mathbf{19}, 0.334 & \mathbf{27}, 0.449 & \mathbf{44}, 0.600 & \mathbf{60}^+, 
  0.773 & \boldsymbol{\infty}, 0.965 & \boldsymbol{\infty}, 1.116 & 
  \boldsymbol{\infty}, 1.168 \\[1ex]
  \hline
\end{array}$$
\end{table*}
We use Galerkin variational approach to construct the coarser level
algebraic systems. There are two types of prolongation operators under
consideration;  Wesseling-Khalil~\cite{wesselingKhalil1992} and bi-linear,
given by respectively:

\begin{eqnarray*}
  P^{(WK)} & = & \frac{1}{4}
    \left[ \begin{array}{ccccc} 
          1 & 1 & & 0 & 0 \\
          1 & 3 & & 2 & 0 \\
            &   & \cdot & & \\
          0 & 2 & & 3 & 1 \\
          0 & 0 & & 1 & 1
\end{array} \right]_{2h}^{h}, ~~\text{and}~~  R^{(WK)} = P^{(WK)^*}, \\
P^{(B)} & = & \frac{1}{16}
    \left[ \begin{array}{ccccc} 
          1 & 3 & & 3 & 1 \\
          3 & 9 & & 9 & 3 \\
            &   & \cdot & & \\
          3 & 9 & & 9 & 3 \\
          1 & 3 & & 3 & 1
\end{array} \right]_{2h}^{h}, ~~\text{and}~~  R^{(B)} = P^{(B)^*}.
 \end{eqnarray*}
The multigrid preconditioner CCMG is derived from the implementation
by Aksoylu, Bond, and Holst ~\cite{AkBoHo03}.  We employ a
V(1,1)-cycle with point symmetric Gauss-Seidel (sGS) or ILU smoothers.
A direct solver is used for the coarsest level.  We construct two 
different multilevel hierarchies for multigrid preconditioners
$M_{HH}$ in \eqref{practicalAGKS} and $M_{LL}$ in
\eqref{Sherman-Morrison} for DOF corresponding to $\Omega_H$ and $\Omega_L$,
respectively. The corresponding prolongation matrices $P_{HH}$ and
$P_{LL}$ are extracted from the prolongation matrix for whole domain
$\Omega$ in the fashion following \eqref{2x2blockSys}:
\begin{equation*} \label{2x2blockSysProlongation}
P =  
\left[
\begin{array}{ll}
P_{HH} & P_{HL} \\ P_{LH} & P_{LL}
\end{array}
\right].
\end{equation*}
The superior performance of AGKS preconditioner is partially due to
employing these two distinct multilevel hierarchies, which is very much
in spirit of the aforementioned decoupling in Section
\ref{sec:subdomainDefl}.  In fact, due to decoupling, AGKS technology
allows the usage of any ordinary prolongation operator. This operator
does not have to be constructed in a sophisticated manner as in 
the case of Wesseling and Khalil~\cite{wesselingKhalil1992}
or Kwak~\cite{kwak1999}.

As emphasized in our preceding paper~\cite{AGKS:2007}, AGKS can be
used purely as an algebraic preconditioner.  Therefore, the standard
multigrid preconditioner constraint that the coarsest level mesh
resolves the boundary of the island is automatically
eliminated. However, for a fair comparison, we enforce the coarsest
level mesh to have that property.

When the discretization matrix is scaled by $1/m$, we observe a
significant reduction in the iteration count for the AGKS
preconditioner, while, CCMG suffers from inconsistent convergence
behaviour. That is why, we only report the unscaled case for
CCMG. Moreover, for the CCMG preconditioner, we use lexicographic
ordering. On the other hand, for AGKS, we follow the standard way of
ordering the highly-diffusive after the lowly-diffusive DOF as used
in~\cite{AGKS:2007}.

Note that as the diffusion coefficient $m$ increases, the CCMG method
becomes less effective.  For sufficiently large $m$, it even diverges.
CCMG shows this adverse behaviour for all types of transfer operators,
and smoother types for almost all levels.  From Tables
\ref{CWI}--\ref{CBS}, CCMG preconditioner under ILU smoother provides
about three times faster convergence than that under sGS smoother.
However, we also observe that the CCMG preconditioner becomes totally
ineffective even diverges for $m \geq 10^5$ under both types of
smoothers; see the corresponding columns in Tables
\ref{CWI}--\ref{CBS}.

On the other hand, the AGKS preconditioner becomes more effective with
increasing $m$. Even for the smallest $m$ value, AGKS performance
still remains comparable to the CCMG performance.  Furthermore, as
seen in Tables \ref{AWI}--\ref{ABS}, AGKS preconditioner demonstrates
consistently similar iteration counts and contraction numbers for all
types of transfer operators and smoothers.  Therefore, AGKS
performance is independent of the utilized prolongation operators and
smoothers.

CCMG performance heavily depends on the smoother choice. As opposed to
expectations from the vertex-centered case, we observe that $x$- and
$y$-line smoothers do not improve CCMG performance compared to sGS and
ILU smoothers; see Llorente and Melson~\cite{Llorente.I;Melson.N2000}
for other ordering related smoother complications.  As pointed out by
Mohr and Wienands~\cite{mohrWienands2004}, CCMG may need a sophisticated
smoother like ILU. The specific ILU choice such as no-fill-in and
sparsity threshold dramatically changes CCMG iteration counts. We
use a sparsity threshold of $10^{-5}$ and observe that it improves the
iteration count by a factor of $3$ compared to the no-fill-in
case. The behaviour of ILU was extensively studied by
Wittum~\cite{wittum1989a,wittum1989c}, also 
see~\cite[pp. 98 and 134]{wesseling1988}.

\begin{table*}
\caption{
Preconditioner = AGKS, Prolongation = Wesseling-Khalil, Smoother = ILU}
\label{AWI}
$$
\begin{array}{rrrrrrrrrrrrrrr}
  \hline \\
  h \backslash m & 10^0 & 10^1 & 10^2 & 10^3 & 10^4 & 10^5 & 10^6 & 10^7 & 
  10^8 & 10^9 & 10^{11} & 10^{13}  \\[1ex]
  \hline \hline \\
  1/8  &~~ \mathbf{22}, 0.371 &~~ \mathbf{10}, 0.115 &~~ \mathbf{10}, 0.116 &~~
  \mathbf{9}, 0.078 &~~ \mathbf{9}, 0.056 &~~ \mathbf{8}, 0.059 &~~ \mathbf{8}, 
  0.045 &~~ \mathbf{8}, 0.039 &~~ \mathbf{8}, 0.039 &~~ \mathbf{8}, 0.039 &~~
  \mathbf{8}, 0.039 &~~ \mathbf{8}, 0.039   \\[1ex]
  1/16 &\mathbf{16}, 0.240 &\mathbf{13}, 0.199 & \mathbf{11}, 0.131 & \mathbf{9},
  0.098 & \mathbf{8}, 0.043 & \mathbf{7}, 0.043 & \mathbf{6}, 0.018
  & \mathbf{6}, 0.011 & \mathbf{6}, 0.007 & \mathbf{6}, 0.010 & \mathbf{5},
  0.010  & \mathbf{5}, 0.006\\[1ex]
  1/32 & \mathbf{20}, 0.329 & \mathbf{19}, 0.313 & \mathbf{14}, 0.192 & 
  \mathbf{11}, 0.127 & \mathbf{9}, 0.093 & \mathbf{8}, 0.035 & \mathbf{7}, 
  0.041 & \mathbf{6}, 0.018 & \mathbf{6}, 0.012 & \mathbf{6}, 0.008 &
  \mathbf{6},  0.004 & \mathbf{5}, 0.008  \\[1ex]
  1/64 & \mathbf{29}, 0.484 & \mathbf{26}, 0.435 & \mathbf{17}, 0.284 & 
  \mathbf{12}, 0.161 & \mathbf{10}, 0.092 & \mathbf{8}, 0.061 & \mathbf{8}, 
  0.026 & \mathbf{6}, 0.031 & \mathbf{6}, 0.016 & \mathbf{6}, 0.010 &
  \mathbf{6}, 0.006 & \mathbf{5}, 0.013 \\[1ex]
  \hline
\end{array} $$
\end{table*}

\begin{table*}
\caption{
Preconditioner = AGKS, Prolongation = Wesseling-Khalil, Smoother = sGS}
\label{AWS}      
$$\begin{array}{rrrrrrrrrrrrrrr}
  \hline \\
  h \backslash m & 10^0 & 10^1 & 10^2 & 10^3 & 10^4 & 10^5 & 10^6 & 10^7 &
  10^8 & 10^9 & 10^{11} & 10^{13}  \\[1ex]
  \hline \hline \\
  1/8  &~~ \mathbf{22}, 0.371 &~~ \mathbf{10}, 0.115 &~~ \mathbf{10}, 0.116 &~~
  \mathbf{9}, 0.078 &~~ \mathbf{9}, 0.056 &~~ \mathbf{8}, 0.059 &~~ \mathbf{8}, 
  0.045 &~~ \mathbf{8}, 0.039 &~~ \mathbf{8}, 0.039 &~~ \mathbf{8}, 0.039 &~~
  \mathbf{8}, 0.039 &~~ \mathbf{8}, 0.039   \\[1ex]
  1/16 &\mathbf{16}, 0.268 & \mathbf{13}, 0.201 & \mathbf{11}, 0.131 &\mathbf{9}, 0.098 & 
  \mathbf{8}, 0.043 & \mathbf{7}, 0.043 & \mathbf{6}, 0.017 & \mathbf{6}, 0.010
  & \mathbf{6}, 0.007 & \mathbf{6}, 0.004 & \mathbf{5}, 0.010 & \mathbf{5},
  0.005\\[1ex]
  1/32 & \mathbf{20}, 0.350 & \mathbf{19}, 0.317 & \mathbf{14}, 0.192 & 
  \mathbf{11}, 0.127 & \mathbf{9}, 0.093 & \mathbf{8}, 0.035 & \mathbf{7},
  0.041   & \mathbf{6}, 0.016 & \mathbf{6}, 0.010 & \mathbf{6}, 0.007 &
  \mathbf{6},
  0.008 & \mathbf{5}, 0.008\\[1ex]
  1/64 & \mathbf{29}, 0.483 & \mathbf{26}, 0.436 & \mathbf{17}, 0.283 &
 \mathbf{12}, 0.162 & \mathbf{10}, 0.092 & \mathbf{8}, 0.061 & \mathbf{8}, 
0.030 & \mathbf{6}, 0.031 & \mathbf{6}, 0.017 & \mathbf{6}, 0.011 & \mathbf{6}, 0.005 & \mathbf{5}, 0.013 \\[1ex]
  \hline
\end{array}$$
\end{table*}

We observe an interesting cut-off $m$ value for performances of
preconditioners.  While, CCMG performance starts to deteriorate at
about $m = 10^5$, the AGKS preconditioner reaches its peak performance
and maintains an optimal iteration count for all $m \geq 10^5$. For
instance, the iteration counts for the CCMG method jumps to $60$ at
about $m = 10^5$ except for level four with ILU smoothing
case. For that level, the iteration count starts to decrease although
the numerical solution does not converge to the exact
solution. Therefore, the performance of the CCMG preconditioner gets
worse after $m = 10^5$. Consequently, with respect to the magnitude of
the coefficient contrast, this observation indicates that CCMG fails
to be robust whereas AGKS maintains its robustness.

\begin{table*}
\caption{Preconditioner = AGKS, Prolongation = Bi-linear, Smoother = ILU}
\label{ABI}      
$$\begin{array}{rrrrrrrrrrrrrrr}
  \hline \\
  h \backslash m & 10^0 & 10^1 & 10^2 & 10^3 & 10^4 & 10^5 & 10^6 & 10^7 &
  10^8 & 10^9 & 10^{11} & 10^{13} \\[1ex]
  \hline \hline \\
  1/8  &~~ \mathbf{22}, 0.371 &~~ \mathbf{10}, 0.115 &~~ \mathbf{10}, 0.116 &~~
  \mathbf{9}, 0.078 &~~ \mathbf{9}, 0.056 &~~ \mathbf{8}, 0.059 &~~ \mathbf{8}, 
  0.045 &~~ \mathbf{8}, 0.039 &~~ \mathbf{8}, 0.039 &~~ \mathbf{8}, 0.039 &~~
  \mathbf{8}, 0.039 &~~ \mathbf{8}, 0.039   \\[1ex]
  1/16 & \mathbf{16}, 0.237 & \mathbf{13}, 0.197 & \mathbf{11}, 0.131 & \mathbf{9}, 0.098 &
  \mathbf{8}, 0.043 & \mathbf{7}, 0.043 & \mathbf{6}, 0.018 & \mathbf{6}, 0.012
  & \mathbf{6}, 0.007 & \mathbf{6}, 0.011 & \mathbf{5}, 0.010 & \mathbf{5},
  0.006\\[1ex]
  1/32 & \mathbf{20}, 0.348 & \mathbf{19}, 0.312 & \mathbf{14}, 0.192 & 
  \mathbf{11}, 0.126 & \mathbf{9}, 0.093 & \mathbf{8}, 0.035 & \mathbf{7}, 0.041
  & \mathbf{6}, 0.016 & \mathbf{6}, 0.010 & \mathbf{6}, 0.006 & \mathbf{6}, 
  0.021 & \mathbf{5}, 0.008 \\[1ex]
  1/64 & \mathbf{29}, 0.483 & \mathbf{26}, 0.431 & \mathbf{17}, 0.283 & 
  \mathbf{12}, 0.161 & \mathbf{10}, 0.092 & \mathbf{8}, 0.062 & \mathbf{8},
  0.026  & \mathbf{6}, 0.030 & \mathbf{6}, 0.015 & \mathbf{6}, 0.010 & 
  \mathbf{6}, 0.006 & \mathbf{5}, 0.013  \\[1ex]
  \hline
\end{array}$$
\end{table*}

\begin{table*}
\caption{Preconditioner = AGKS, Prolongation = Bi-linear, Smoother = sGS}
\label{ABS}      
$$\begin{array}{rrrrrrrrrrrrrrr}
  \hline \\
  h \backslash m & 10^0 & 10^1 & 10^2 & 10^3 & 10^4 & 10^5 & 10^6 & 10^7 &
  10^8 & 10^9 & 10^{11} & 10^{13} \\[1ex] 
  \hline \hline \\
  1/8  &~~ \mathbf{22}, 0.371 &~~ \mathbf{10}, 0.115 &~~ \mathbf{10}, 0.116 &~~
  \mathbf{9}, 0.078 &~~ \mathbf{9}, 0.056 &~~ \mathbf{8}, 0.059 &~~ \mathbf{8}, 
  0.045 &~~ \mathbf{8}, 0.039 &~~ \mathbf{8}, 0.039 &~~ \mathbf{8}, 0.039 &~~
  \mathbf{8}, 0.039 &~~ \mathbf{8}, 0.039   \\[1ex]
  1/16 &\mathbf{16}, 0.256 & \mathbf{13}, 0.197 & \mathbf{11}, 0.131 & \mathbf{9}, 0.098 & 
  \mathbf{8}, 0.043 & \mathbf{7}, 0.043 & \mathbf{6}, 0.017 & \mathbf{6}, 0.010
  & \mathbf{6}, 0.007 & \mathbf{6}, 0.004 & \mathbf{5}, 0.010 & \mathbf{5}, 
  0.004 \\[1ex]
  1/32 & \mathbf{20}, 0.352 & \mathbf{19}, 0.317 & \mathbf{14}, 0.192 & 
  \mathbf{11}, 0.127 & \mathbf{9}, 0.093 & \mathbf{8}, 0.035 & \mathbf{7}, 0.041
  & \mathbf{6}, 0.016 & \mathbf{6}, 0.010 & \mathbf{6}, 0.007 & \mathbf{6},
  0.020 & \mathbf{5}, 0.009 \\[1ex]
  1/64 & \mathbf{29}, 0.481 & \mathbf{26}, 0.438 & \mathbf{17}, 0.281
  & \mathbf{12}, 0.161 & \mathbf{10}, 0.092 & \mathbf{8}, 0.061 & \mathbf{8},
  0.026 & \mathbf{6}, 0.031 & \mathbf{6}, 0.018 & \mathbf{6}, 0.012 &
  \mathbf{6}, 0.008 & \mathbf{5}, 0.014 \\[1ex]
  \hline
\end{array}$$
\end{table*}


\begin{thebibliography}{10}
\providecommand{\url}[1]{{#1}}
\providecommand{\urlprefix}{URL }
\expandafter\ifx\csname urlstyle\endcsname\relax
  \providecommand{\doi}[1]{DOI~\discretionary{}{}{}#1}\else
  \providecommand{\doi}{DOI~\discretionary{}{}{}\begingroup
  \urlstyle{rm}\Url}\fi

\bibitem{A:07}
Aarnes, J.E.: Modelling of multiscale structures in flow simulations for
  petroleum reservoirs.
\newblock In: Geometric Modelling, Numerical Simulation, and Optimization
  Applied Mathematics at SINTEF, pp. 307--360. Springer Verlag (2007)

\bibitem{aarnesGimseLie2007}
Aarnes, J.E., Gimse, T., Lie, K.A.: An introduction to the numerics of flow in
  porous media using {M}atlab.
\newblock In: G.~Hasle, K.A. Lie, E.~Quak (eds.) {Geometrical Modeling,
  Numerical Simulation and Optimisation: Industrial Mathematics at SINTEF}, pp.
  265--306. Springer Verlag (2007)

\bibitem{AarnesHou:02}
Aarnes, J.E., Hou, T.: Multiscale domain decomposition methods for elliptic
  problems with high aspect ratios.
\newblock Acta Mathematicae Applicatae Sinica \textbf{18}(1), 63--76 (2002)

\bibitem{AkBe2008}
Aksoylu, B., Beyer, H.R.: Results on
  the diffusion equation with rough coefficients (2008).
\newblock Submitted to SIAM J. Math. Anal.

\bibitem{AkBe2009}
Aksoylu, B., Beyer, H.R.: Characterization of the asymptotic cases of the
  diffusion equation with rough coefficients and applications to
  preconditioning.
\newblock Numer. Funct. Anal. Optim.  (2009).
\newblock Accepted for publication

\bibitem{AkBoHo03}
Aksoylu, B., Bond, S., Holst, M.:
An Odyssey into Local Refinement and
  Multilevel Preconditioning III: Implementation and Numerical Experiments.
\newblock SIAM J. Sci. Comput. \textbf{25}(2), 478--498 (2003)

\bibitem{AGKS:2007}
Aksoylu, B., Graham, I.G., Klie, H., Scheichl, R.:
Towards a rigorously justified algebraic preconditioner for high-contrast
  diffusion problems.
\newblock Comput. Vis. Sci. \textbf{11}, 319--331 (2008).
\newblock Doi:10.1007/s00791-008-0105-1

\bibitem{AkKl:2007}
Aksoylu, B., Klie, H.:
A family of
  physics-based preconditioners for solving elliptic equations on highly
  heterogeneous media.
\newblock Appl. Num. Math. \textbf{59}, 1159--1186 (2009).
\newblock Doi:10.1016/j.apnum.2008.06.002

\bibitem{ewingShen1993}
Ewing, R.E., Shen, J.: A multigrid algorithm for the cell-centered finite
  difference scheme.
\newblock In: The Proceedings of $6$-th Copper Mountain conference on Multigrid
  Methods. NASA Conference Publication 3224 (1993)

\bibitem{eymardGallouetHerbin2000_fvmBook}
Eymard, R., Gallouët, T., Herbin, R.: The finite volume method.
\newblock In: P.G. Ciarlet, J.~Lions (eds.) Handbook for Numerical Analysis,
  pp. 715--1022. North Holland (2000)

\bibitem{GD:05}
Gerritsen, M., Durlofsky, L.: {Modeling fluid flow in oil reservoirs}.
\newblock Annu. Rev. Fluid Mech \textbf{37}, 211--238 (2005)

\bibitem{GoVL89}
Golub, G.H., Loan, C.F.V.: Matrix Computations.
\newblock Johns Hopkins University Press, Baltimore, MD (1989)

\bibitem{GrHa:99}
Graham, I.G., Hagger, M.J.: {Unstructured additive {Schwarz}-conjugate gradient
  method for elliptic problems with highly discontinuous coefficients}.
\newblock SIAM J. Sci. Comp. \textbf{20}(6), 2041--2066 (1999)

\bibitem{GrLeSc:2007}
Graham, I.G., Lechner, P.O., Scheichl, R.: {Domain decomposition for multiscale
  PDEs}.
\newblock Numer. Math. \textbf{106}, 589--626 (2007).
\newblock \doi{10.1007/s00211-007-0074-1}

\bibitem{GrSc:2007}
Graham, I.G., Scheichl, R.: Robust domain decomposition algorithms for
  multiscale {PDEs}.
\newblock Numer. Methods Partial Differential Equations pp. 859--878 (2007).
\newblock \doi{10.1002/num.20254}

\bibitem{KnWi:2003}
Knyazev, A., Widlund, O.: {Lavrentiev regularization + Ritz approximation =
  uniform finite element error estimates for differential equations with rough
  coefficients}.
\newblock Math. Comp. \textbf{72}(241), 17--40 (2003)

\bibitem{kwak1999}
Kwak, D.Y.: V-cycle multigrid for cell-centered finite differences.
\newblock SIAM J. Sci. Comput. \textbf{21}(2), 552--564 (1999)

\bibitem{kwakLee2004}
Kwak, D.Y., Lee, J.S.: Multigrid algorithm for the cell-centered finite
  difference method ii: {D}iscontinuous coefficient case.
\newblock Numer. Meth. Partial Differential Eqs. \textbf{20}(5), 723--741
  (2004)

\bibitem{Llorente.I;Melson.N2000}
Llorente, I.M., Melson, N.D.: Behavior of plane relaxation methods as multigrid
  smoothers.
\newblock Elect. Trans. Numer. Anal. \textbf{10}, 92--114 (2000)

\bibitem{MY:06}
Ming, P., Ye, X.: Numerical methods for multiscale elliptic problems.
\newblock J. Comp. Phys. \textbf{214}, 421--445 (2006)

\bibitem{mohrWienands2004}
Mohr, M., Wienands, R.: Cell-center multigrid revisited.
\newblock Comput. Vis. Sci. \textbf{7}, 129--140 (2004)

\bibitem{Nicolaides}
Nicolaides, R.: Deflation of conjugate gradients with applications to boudary
  value problems.
\newblock SIAM J. Numer. Anal. \textbf{24}, 355--365 (1987)

\bibitem{Noetinger05}
{N{\oe}tinger}, B., {Artus}, V., {Zargar}, G.: {The future of stochastic and
  upscaling methods in hydrogeology}.
\newblock Hydrogeology Journal \textbf{13}, 184--201 (2005)

\bibitem{ScVa2007}
Scheichl, R., Vainikko, E.: Additive schwarz and aggregation-based coarsening
  for elliptic problems with highly variable coefficients.
\newblock Computing \textbf{80}(4), 319--343 (2007)

\bibitem{Trottenberg.U;Oosterlee.C;Schuller.A2001}
Trottenberg, U., Oosterlee, C., Sch{\"u}ller, A.: Multigrid.
\newblock Academic Press, London (2001)

\bibitem{VuSeMe:1999}
Vuik, C., A.Segal, Meijerink, J.: An efficient preconditioned {CG} method for
  the solution of a class of layered problems with extreme contrasts of
  coefficients.
\newblock J. Comp. Phys. \textbf{152}, 385--403 (1999)

\bibitem{Vuik1999}
Vuik, C., A.Segal, Meijerink, J.: An efficient preconditioned {CG} method for
  the solution of a classof layered problems with extreme contrasts of
  coefficients.
\newblock J. Comp. Phys. \textbf{152}, 385--403 (1999)

\bibitem{Vuik01}
Vuik, C., A.Segal, Meijerink, J., G.T.Wijma: The construction of projection
  vectors for a {ICCG} method applied to problems with extreme contrasts in the
  coefficients.
\newblock J. Comp. Phys. \textbf{172}, 426--450 (2001)

\bibitem{wesseling1988}
Wesseling, P.: Cell-centered multigrid for interface problems.
\newblock J.\ Comput.\ Phys. \textbf{79}, 85--91 (1988)

\bibitem{Wesseling.P2001}
Wesseling, P.: An Introduction to Multigrid Methods.
\newblock http://www.MGNet.org, Cos Cob, CT (2001).
\newblock Reprint of the 1992 edition

\bibitem{wesselingKhalil1992}
Wesseling, P., Khalil, M.: Vertex-centered and cell-centered multigrid for
  interface problems.
\newblock Journal of Computational Physics \textbf{98}, 1--10 (1992)

\bibitem{wittum1989a}
Wittum, G.: Linear iterations as smoothers in multigrid methods: {T}heory with
  applications to incomplete decompositions.
\newblock Impact Comput. Sci. Eng. \textbf{1}, 180--215 (1989)

\bibitem{wittum1989c}
Wittum, G.: On the robustness of {ILU}--smoothing.
\newblock SIAM J. Sci. Stat. Comput. \textbf{10}, 699--717 (1989)

\end{thebibliography}
\end{document}